\tikzstyle arrowstyle=[scale=1]
\tikzstyle directed=[postaction={decorate,decoration={markings,mark=at position .5 with {\arrow[arrowstyle]{stealth}}}}]
\definecolor{midgreen}{rgb}{0.,0.7,0.} 
\definecolor{green}{rgb}{0.,0.6,0.} 
\def\rouge{\textcolor{red}}
\def\bleu{\textcolor{blue}}
\def\vert{\textcolor{green}}
\newcommand{\facteur}{.95}
\newcommand{\dx}{0.01}
\newcommand{\dy}{-.99}
\newcommand{\jambe}[2]{\path [fill=yellow] (\dx+#1,\dy+#2) rectangle ($(\dx+#1,\dy+#2)+\facteur*(1,1)$);}
\newcommand{\jambepale}[2]{\path [fill=yellow!15] (\dx+#1,\dy+#2) rectangle ($(\dx+#1,\dy+#2)+\facteur*(1,1)$);}
\newcommand{\jambeverte}[2]{\path [fill=midgreen] (\dx+#1,\dy+#2) rectangle ($(\dx+#1,\dy+#2)+(1,1)$);}
\newcommand{\caseverteetiquetee}[3]{\path [fill=green] (\dx+#1,\dy+#2) rectangle ($(\dx+#1,\dy+#2)+(1,1)$);
                                                               \node[color=yellow] at (#1+.55,#2-.45){\Large \bf{#3}};}
\newcommand{\carrejaune}[2]{\filldraw[draw=black,fill=yellow,line width=.15mm] (#1-.5,#2-.5) rectangle (#1+.5,#2+.5);}
\newcommand{\carrevert}[2]{\filldraw[draw=black,fill=midgreen,line width=.15mm] (#1-.5,#2-.5) rectangle (#1+.5,#2+.5);}
\newcommand{\carrebleu}[2]{\filldraw[draw=black,line width=.15mm, fill=blue, inner sep=1pt] (#1-.5,#2-.5) rectangle (#1+.5,#2+.5);}
\newcommand{\cellrow}[2]{\foreach \x in {0,...,#2} {\jambe{\x}{#1} ;}}
\newcommand{\cellrowpale}[2]{\foreach \x in {0,...,#2} {\jambepale{\x}{#1} ;}}
\newcommand{\crochet}[4]{\coordinate (A) at (#1,#2); 
					  \coordinate (B) at ($(A)+(#3,0)$); 
					  \coordinate (C) at ($(B)-(0,#4)$); 
                                            \draw[red,ultra thick,line join=round,line cap=round] (A) -- (B) -- (C) ;}
\DeclareMathOperator{\area}{area}
\newcommand{\A}{\mathcal{A}}
\newcommand{\coeff}{\bm{c}}
\DeclareMathOperator{\card}{card}
\newcommand{\coeffprime}{\bm{d}}
\newcommand{\charac}{\raise 2pt\hbox{\large$\chi$}}
\DeclareMathOperator{\dinv}{dinv}
\newcommand{\eperp}[1]{(e_{#1}^\perp\otimes \Id)}
\newcommand{\E}{\mathcal{E}}
\newcommand{\F}{\mathcal{T}}
\DeclareMathOperator{\GL}{GL}
\DeclareMathOperator{\Id}{Id}
\newcommand{\M}{\mathcal{M}}
\newcommand{\Mnk}[2]{\M_{#1}^{\langle #2\rangle}}
\newcommand{\N}{\mathbb{N}}
\newcommand{\Rational}{\mathbb{Q}}
\newcommand{\scalar}[2]{{\langle#1,#2 \rangle}}
\DeclareMathOperator{\SSYT}{SSYT}
\renewcommand{\S}{\mathbb{S}}
\newcommand{\Super}{\mathcal{S}}
\newcommand{\Snk}[2]{\Super_{#1}^{\langle #2\rangle}}
\newcommand{\auteur}[1]{{\sc #1}}
\newcommand{\titreref}[1]{{\em #1}}
\newcommand{\vol}[1]{{\bf #1}}
\newcommand{\MathReview}[1]{}
\newcommand{\pref}[1]{{\rm (\ref{#1})}}
\newcommand{\define}[1]{\bleu{\bf{#1}}}
\newtheorem{theorem}{\bleu{Theorem}}
\newtheorem{lemma}{\bleu{Lemma}}
\newtheorem{conjecture}{\bleu{Conjecture}}
\newtheorem{prop}{\bleu{Proposition}}
\numberwithin{equation}{section}
\numberwithin{lemma}{section}
\numberwithin{rmk}{section}
\numberwithin{cor}{section}
\title[Multi]{$(\GL_k\times\S_n)$-modules and
  Nabla of Hook-Indexed Schur functions}
\author{F.~Bergeron}
\address{\href{http://bergeron.math.uqam.ca}{Département de Mathématiques, Lacim, UQAM.}}
\thanks{During this research, the author was supported by a NSERC grant.}
  \email{\href{mailto:bergeron.francois@uqam.ca}{bergeron.francois@uqam.ca}}
  \date{\bleu{\bf \today}}
\urladdr{bergeron.math.uqam.ca}
\subjclass[2010]{Primary 05E05, 05E10, 20C30}   
\keywords{Nabla operator, Macdonald polynomials, Schur functions}
\begin{document}


\begin{abstract} 
The aim of this paper is to describe structural properties of spaces of diagonal rectangular harmonic polynomials in several sets (say $k$) of $n$ variables, both as $\GL_k$-modules and $\S_n$-modules. We construct explicit such modules associated to any hook shape partitions. For the two sets of variables case, we conjecture that the associated graded Frobenius characteristic corresponds to the effect of the operator Nabla on the corresponding hook-indexed Schur function, up to a usual renormalization. We prove identities  that give indirect support to this conjecture, and show that its restriction to one set of variables holds. We further give indications on how the several sets context gives a better understanding of questions regarding the structures of these modules and the links between them.
\end{abstract}
\maketitle

\footskip=30pt

  
\parindent=20pt \parskip=8pt


\section{\bleu{Introduction}}
Our aim in this paper is to describe, for all hook-shape partitions $\rho$, new $(\GL_k\times\S_n)$-modules of $k$-variate diagonal harmonic polynomials, here denoted by $\mathcal{S}_{\rho}^{\langle k\rangle}$, whose $\N^k$-graded Frobenius characteristic specializes to $\nabla(\widehat{s}_{\rho})$, when  one sets $k=2$. Here $\widehat{s}_{\rho}$ stands for the normalized Schur function\footnote{We essentially use Macdonald's notations (see~\cite{macdonald}), but with French conventions.}
\begin{equation}
    \widehat{s}_{\rho}:=\big({\textstyle \frac{-1}{qt}}\big)^{\iota_\rho} s_\rho,\qquad {\rm with}\qquad \iota_\rho:=\sum_{\rho_i>i} \rho_i-i.
\end{equation} 
We also recall that $\nabla$ is the operator (introduced in~\cite{ScienceFiction}) on symmetric polynomials (with coefficients in $\Rational(q,t)$) for which the ``combinatorial Macdonald polynomials'' $\widetilde{H}_\lambda(q,t;\bm{z})$ are joint eigenfunctions, with eigenvalue $T_\lambda=T_\lambda(q,t):=\prod_{(i,i)} q^i t^j$. Here the product is over the cells $(i,j)$ of $\lambda$. For more background on these notions, see~\cite{identity,haimanhilb}.
 
For hook shape partitions $(a+1,1^b)$, of $n=a+b+1$, we will use the \define{Frobenius's notation}  $(a\,|\,b)$
 (see Figure~\ref{Fig_hook_shape}). For example, we have
   $$(4\,|\,0)=5,\qquad (3\,|\,1)=41,\qquad (2\,|\,2)=311,\qquad (1\,|\,3)=2111,\qquad (0\,|\,4)=11111.$$
Observe that, for $\rho=(a\,|\,b)$, the value of $\iota(\rho)$ is simply equal to $a$. 
\begin{figure}[ht]
\begin{center}
 \begin{tikzpicture}[thick,scale=.4]
 \node at (-1.2,2.5) {$b\left\{\rule{0cm}{24pt}\right.$};
 \node at (4.5,1.3) {$\overbrace{\hskip3.1cm}^{\textstyle a}$};
\carrevert{0}{4}
\carrevert{0}{3}
\carrevert{0}{2}
\carrevert{0}{1}
\carrebleu{0}{0}
\carrejaune{1}{0}\carrejaune{2}{0}\carrejaune{3}{0}\carrejaune{4}{0}\carrejaune{5}{0}\carrejaune{6}{0}\carrejaune{7}{0}\carrejaune{8}{0}
\end{tikzpicture}
\end{center} \vskip-10pt
\caption{The hook shape $(a\,|\,b)$.}
\label{Fig_hook_shape} 
\end{figure}

\noindent
Our   graded modules  $\mathcal{S}_{\rho}^{\langle k\rangle}$ are associated to modules $\Mnk{\rho}{k}$. These occur in a bi-filtration of $\GL_k\times \S_n$-modules (over the field $\Rational$), with rows indexed by hooks going from $(n-1\,|\,0)$ to $(0\,|\,{n-1})$, hence in increasing number of $1$'s; columns correspond to integers $k\in\N^+$ (as numbers of ``sets'' of variables)
\begin{displaymath}
\xymatrix@-=0.4cm{
            \Mnk{(n-1\,|\,0)}{1} \ar@{^{(}->}[d]    \ar@{^{(}->}[r] 
	& \Mnk{(n-1\,|\,0)}{2} \ar@{^{(}->}[d]  \ar@{^{(}->}[r] 
	& \ar@{.}[r]
	&\ \ar@{^{(}->}[r]
	&\Mnk{(n-1\,|\,0)}{k}  \ar@{^{(}->}[d]  \ar@{^{(}.>}[rr]^{\hskip-3pt \scriptscriptstyle k\to \infty}
	&\ 
	& \bleu{\M_{(n-1\,|\,0)} }\ar@{^{(}->}[d] \\
             \Mnk{(n-2\,|\,1)}{1} \ar@{^{(}->}[d]   \ar@{^{(}->}[r]  
 	&  \Mnk{(n-2\,|\,1)}{2} \ar@{^{(}->}[d]   \ar@{^{(}->}[r] 
	& \ar@{.}[r]
	&\ \ar@{^{(}->}[r]  
	&\Mnk{(n-2\,|\,1)}{k} \ar@{^{(}->}[d]  \ar@{^{(}.>}[rr]^{\hskip-3pt \scriptscriptstyle k\to \infty}  
	&\ 
	&\bleu{ \M_{(n-2\,|\,1)}} \ar@{^{(}->}[d]  \\ 
	  \ar@{.}[d]  
	&\ar@{.}[d]
	&  
	& 
	&\ar@{.}[d]
	&
	&\ar@{.}[d] \\
 	   \ar@{^{(}->}[d] 
	&\ar@{^{(}->}[d]
	&
	&
	&\ar@{^{(}->}[d]
	&
	&\ar@{^{(}->}[d] \\
	   \Mnk{(1\,|\,n-2)}{1} \ar@{^{(}->}[d]    \ar@{^{(}->}[r] 
	& \Mnk{(1\,|\,n-2)}{2} \ar@{^{(}->}[d]  \ar@{^{(}->}[r] 
	& \ar@{.}[r]
	&\  \ar@{^{(}->}[r]
	&\Mnk{(1\,|\,n-2)}{k}  \ar@{^{(}->}[d]  \ar@{^{(}.>}[rr]^{\hskip-3pt \scriptscriptstyle k\to \infty}
	&\ 
	& \bleu{\M_{(1\,|\,n-2)}} \ar@{^{(}->}[d] \\
	    \Mnk{(0\,|\,n-1)}{1}   \ar@{^{(}->}[r] 
	& \Mnk{(0\,|\,n-1)}{2}  \ar@{^{(}->}[r] 
	& \ar@{.}[r]
	&\ \ar@{^{(}->}[r] 
	&\Mnk{(0\,|\,n-1)}{k}   \ar@{^{(}.>}[rr]^{\hskip-3pt \scriptscriptstyle k\to \infty}
 	&\ 
	&\bleu{ \M_{(0\,|\,n-1)} }}
\end{displaymath}
Each row stabilizes 
when $k$ becomes large enough; and we have the inductive limits 
    $$\bleu{\M_{(a\,|\,b)}}:=\lim_{k\to \infty}\Mnk{(a\,|\,b)}{k},$$ 
which are $\GL_\infty\times \S_n$-modules. It is convenient to set $\M_{(n\,|\,-1)}:=0$, and then consider the quotient modules
    $$\bleu{\Super_{(a\,|\,b)}}=\M_{(a\,|\,b)}/ \M_{(a+1,b-1)},$$
 for all hooks $(a\,|\,b)$.
Explicitly  $\M_{(a\,|\,b)}$ is the smallest module which is:
\begin{itemize}
\item[$\bullet$] closed for ``polarization'' (see definition~\pref{def_polarization} below), 
\item[$\bullet$] closed for derivation with respect to all variables except the $\theta_i$'s, and 
\item[$\bullet$] contains the determinant:
\end{itemize}
 \begin{equation}\label{def_Determinant_hook}
\bm{D}_{(a\,|\,b)}(\bm{x}):=\det \begin{pmatrix} 
     \bleu{\theta_1} &1 & x_1 &\cdots & \widehat{x_1^a} &\cdots  &  x_1^{n-1}\\
     \bleu{\theta_i}& 1 & x_2 & \cdots & \widehat{x_2^a} & \cdots&  x_2^{n-1}\\
     \vdots &\vdots  & \vdots & \ddots & \vdots & \ddots & \vdots \\
       \bleu{ \theta_n}& 1 & x_n  &\cdots& \widehat{x_n^a}  &\cdots &  x_n^{n-1}   
    \end{pmatrix};
 \end{equation}
where $\widehat{(-)}$ indicates that entries of that column are removed. Only the first column involves the variables $\theta_i$, which are said to be \define{inert} and considered to be of $0$-degree. Observe that, if we remove the first column (the $\theta$-column) and keep all others, the result is the classical Vandermonde determinant. The inclusions occurring in the columns are easily obtained if one observes that 
  $$\sum_{i=1}^n \partial x_i\, \bm{D}_{(a\,|\,b)} =\begin{cases}
   (a+1)\, \bm{D}_{(a+1\,|\,b-1)},   & \text{ if}\ 0\leq a<n-1, \\[4pt]
   0,   & \text{otherwise}.
\end{cases}$$
By construction, $\M_\rho$ is a (multi-)homogenous sub-module of the $\N^\infty$-graded ring $\bm{R}:=\Rational[\bm{X}]$ of polynomials in the set of variables $\bm{X}$, consisting of a denumerable number of sets of $n$-variables\footnote{The modules $\Mnk{\rho}{k}$ are likewise defined with variables restricted to the first $k$ rows of $\bm{X}$.}. The variables $\bm{X}$ are conveniently presented as an $\infty\times n$ matrix
   \begin{equation}
      \bm{X} = \begin{pmatrix} x_1& x_2&\ldots & x_n\\
         y_1& y_2&\ldots & y_n\\
         \vdots &\vdots &\ddots &\vdots\\
       \end{pmatrix}
   \end{equation}
 in which the first row is the ``set'' $\bm{x}$: of variables $x_i$, for $1\leq i\leq n$. The grading of a polynomial $F$ in $\bm{R}$ is the sequence of degrees $\deg(F)=(\deg_{\bm{x}}(F),\deg_{\bm{y}}(F),\ldots)$, respective to each of the rows of $\bm{X}$. The  natural commuting actions of $\GL_\infty$ and $\S_n$, on polynomials $F(\bm{X})$ in $\bm{R}$, are jointly described by setting
    \begin{equation}
       F(\bm{X})\mapsto F(g \bm{X}\sigma),\qquad{\rm for}\qquad g\in\GL_\infty,\quad {\rm and}\quad \sigma\in \S_n,
    \end{equation}
where elements of the symmetric group $\S_n$ are here considered as $n\times n$ permutation matrices. For each pair $\bm{u}=(u_i)_i$ and $\bm{v}=(v_i)_i$ of rows of $\bm{X}$, and integer $r\geq 1$, the \define{(higher) polarization operator} $E_{\bm{u}\bm{v}}^{(r)}$ is:
   \begin{equation}\label{def_polarization}
       E_{\bm{u}\bm{v}}^{(r)} :=\sum_{i=1}^n v_i\,\frac{\partial^r}{\partial u_i^r}.
   \end{equation}
 We often drop the super index ``$(r)$'' when $r=1$. In formula, we may then write the definition of  $\M_{(a\,|\,b)}$  as
 $$\M_{(a\,|\,b)}:=\Rational(\{\partial x_i\}_{x_i\in\bm{x}}; \{E_{\bm{u}\bm{v}}^{(r)} \}_{r,\bm{u},\bm{v}})\, \bm{D}_{(a\,|\,b)}(\bm{x}).$$
 The \bleu{($\N^\infty$-)graded Frobenius characteristic} of such a module $\M_\rho$, for $\rho={(a\,|\,b)}$, is the generating function of the characters of its graded components, that is: 
\begin{align}
  \M_\rho(\bm{q};\bm{z})&:=\sum_{\bm{d}\in \N^\infty} \bm{q}^{\bm{d}} \sum_{\mu\vdash n} \charac^\rho_{\bm{d}}(\mu) \frac{p_\mu(\bm{z})}{z_\mu},\nonumber\\
       &=\sum_{\mu\vdash n} \Big(\sum_{f\in \mathcal{B}_{\rho}^\mu} \bm{q}^{\deg(f)}\Big) s_\mu(\bm{z}),\label{Isotypic_Hilbert}
 \end{align}
 where  $\bm{q}^{\bm{d}}:=q_1^{d_1} q_2^{d_2} \cdots$,
for $\bm{d}=(d_1,d_2,\ldots)$, and $\chi^\rho_{\bm{d}}$ is the character of the $\bm{d}$-homogenous component of $\M_\rho$. We recall that, with respect to the Frobenius map, irreducible $\S_n$-representations are precisely encoded by Schur functions $s_\mu(\bm{z})$, with $\bm{z}=(z_i)_i$ a set of formal variables. 
The expansion in~\pref{Isotypic_Hilbert}
 corresponds to the decomposition of $\M_\rho$ into $\S_n$-isotypic components $\M_\rho^\mu$, one for each partition $\mu$ of $n$. Indeed, these $\M_\rho^\mu$'s are clearly graded, and they afford bases of homogeneous polynomials $\mathcal{B}_\rho^\mu$. Hence, considering $\bm{q}$ as a formal diagonal matrix in $\GL_\infty$, we may express this homogeneity of $F\in \mathcal{B}_\rho^\mu$  as
     $$F(\bm{q}\bm{X}) = \bm{q}^{\deg(F)}\, F(\bm{X}).$$
 The coefficients of each $s_\mu(\bm{z})$ in~\pref{Isotypic_Hilbert} may thus be considered, either as the Hilbert series of the corresponding $\S_n$-isotypic components, or as $\GL_\infty$-characters of (polynomial) representations. Recall that the characters of polynomials irreducible $\GL_\infty$-representations are also Schur functions (here in the variables $\bm{q}$). It follows that we have 
\begin{equation}
  \M_\rho(\bm{q};\bm{z})=\sum_{\mu\vdash n}\sum_\lambda a_{\lambda\mu}^\rho s_\lambda(\bm{q}) s_\mu(\bm{z}),
   \end{equation}
where each of the integer $a_{\lambda\mu}^\rho$ gives the number of copies of the $\GL_\infty$-irreducible having character $s_\lambda(\bm{q})$ in the $\S_n$-isotypic component $\M_\rho^\mu$.

In light of the discussion above, for $\rho=(a\,|\,b)$, we have
    \begin{align}
  \Super_\rho(\bm{q};\bm{z}) &:=  \M_{(a\,|\,b)}(\bm{q};\bm{z})- \M_{(a+1\,|\,b-1)}(\bm{q};\bm{z})\nonumber \\
    &=\sum_{\mu\vdash n}\sum_\lambda c_{\lambda\mu}^\rho s_\lambda(\bm{q}) s_\mu(\bm{z}),
 \end{align}
 with the \define{multiplicities} $c_{\lambda\mu}^\rho\in \N$ equal to $a_{\lambda\mu}^{(a\,|\,b)}-a_{\lambda\mu}^{(a+1\,|\,b-1)}$. It is convenient to express this in a ``tensor'' variable-free format
    \begin{equation}
       \Super_\rho= \sum_{\mu\vdash n}\sum_\lambda c_{\lambda\mu}^\rho s_\lambda\otimes s_\mu,
     \end{equation}
with the tensor product keeping track of the distinction between $\GL_\infty$-characters (left-hand side) and Frobenius of $\S_n$-irreducibles (right-hand side). The {multiplicities}  $c_{\lambda\mu}^\rho$  are only non-vanishing when the partition $\lambda$ has at most $n-1$ parts, with size bounded by $\binom{n}{2}+b$ (for $\rho=(a\,|\,b)$).
  We denote by   $\coeff_{\rho\mu}$ the \define{coefficient} of $s_\mu$ in $\Super_\rho$, and also write $\A_\rho$ for  $\coeff_{\rho,1^n}$. In formulas:
   \begin{equation}
       \coeff_{\rho\mu}=\sum_{\lambda} c_{\lambda\mu}^\rho s_\lambda,\qquad{\rm and}\qquad \A_{\rho}=\sum_{\lambda} c_{\lambda,1^n}^\rho s_\lambda.
    \end{equation}
We also consider the ``scalar product''
such that $\scalar{ f\otimes s_\nu}{s_\mu} = f$, so that $\coeff_{\rho\mu}=\scalar{\Super_\rho}{s_\mu}$. The \define{length restriction operator} $L_{\leq k}$ effect on $\Super_\rho$ is set to be:
\begin{equation}
   \rouge{L_{\leq k}}( \Super_\rho):=\sum_{\mu\vdash n}\sum_{\rouge{\ell(\lambda)\leq k}} c_{\lambda\mu}^\rho s_\lambda\otimes s_\mu.
\end{equation}
We may also write
\begin{equation}
   \rouge{L_{\leq k}}( \M_\rho)=\sum_{\rouge{0\leq i\leq k}} \Mnk{\rho}{i},
   \end{equation}
considering that each $\Mnk{\rho}{i}$ is understood to be in tensor format.


 \subsection*{Effect of \texorpdfstring{$\nabla$}{N} on Schur functions indexed by hooks.} 
To better express our main conjecture, we consider the $s_\lambda\otimes s_\mu$-expansion of $\nabla(\widehat{s}_\rho)$ (with $\rho$ a hook as above):
   \begin{align}
     \nabla(\widehat{s}_\rho) &= \sum_{\mu\vdash n} \sum_{\lambda} b_{\lambda\mu}^\rho s_\lambda \otimes s_\mu,\label{Expansion_nabla}
   \end{align}
which corresponds to the fact that $\nabla(\widehat{s}_\rho)(q,t;\bm{z})$ affords the expansion
      \begin{align*}
       \nabla(\widehat{s}_\rho)(q,t;\bm{z}) &= \sum_{\mu\vdash n} \sum_\lambda b_{\lambda\mu}^\rho s_\lambda(q,t) s_\mu(\bm{z}).
     \end{align*}
Recall that $s_\lambda(q,t)=0$ whenever the partition $\lambda$ has more than two parts. In other words, only partitions of {length}  at most two (in formula $\ell(\lambda)\leq 2$) may contribute to the internal sum in~\pref{Expansion_nabla}.

Now, let 
\begin{equation}\label{defn_staircase}
\bleu{\delta^{(n)}:=(n-1,n-2,\cdots, 2,1,0)}
\end{equation}
 be the \define{$n$-staircase} partition (see figure below). A \define{Dyck path} $\gamma$ may be identified to a partition contained in $\delta^{(n)}$ (equivalently it lies below the \define{diagonal} going from $(0,n)$ to $(n,0)$), as is illustrated in Figure~\ref{Fig_Dyck_path}.

 \begin{figure}[ht]
\begin{center}
 \begin{tikzpicture}[thick,scale=.5]
 \coordinate (NW) at (0,9);
 \coordinate (SE) at (9,0);
\cellrow{6}{0}
\cellrow{5}{1}
\cellrow{4}{4}
\cellrow{3}{4}
\cellrow{2}{5}
\cellrow{1}{6}
\jambeverte{0}{8}
\jambeverte{0}{7}\jambeverte{1}{7}
\jambeverte{1}{6}\jambeverte{2}{6}
\jambeverte{2}{5}\jambeverte{3}{5}
\jambeverte{5}{3}
\jambeverte{6}{2}
\jambeverte{7}{1}
 \draw[step=1.0,black,thin] (0,0) grid (9,9);
  \draw[blue] (NW) to (SE) ;
\crochet{0}{9}{0}{3}
\crochet{0}{6}{1}{1}
\crochet{1}{5}{1}{1}
\crochet{2}{4}{3}{2}
\crochet{5}{2}{1}{1}
\crochet{6}{1}{1}{1}
 \draw[red,ultra thick] (7,0) to (9,0) ;
  \filldraw[black] (0,9) circle (.07);
   \filldraw[black] (9,0) circle (.07);
\node at (-.5,9.5) {$\scriptstyle (0,9)$};
\node at (9.5,-.5) {$\scriptstyle (9,0)$};
\node at (3.4,3) {{\huge \rouge{$\bm\gamma$}}};
\node at (10.3,8.5) {$\scriptstyle {a_9}=\vert{0}$};
\node at (10.3,7.5) {$\scriptstyle {a_8}=\vert{1}$};
\node at (10.3,6.5) {$\scriptstyle {a_7}=\vert{2}$};
\node at (10.3,5.5) {$\scriptstyle {a_6}=\vert{2}$};
\node at (10.3,4.5) {$\scriptstyle {a_5}=\vert{2}$};
\node at (10.3,3.5) {$\scriptstyle {a_4}=\vert{0}$};
\node at (10.3,2.5) {$\scriptstyle {a_3}=\vert{1}$};
\node at (10.3,1.5) {$\scriptstyle {a_2}=\vert{1}$};
\node at (10.3,0.5) {$\scriptstyle {a_1}=\vert{1}$};
\node at (10.3,9.5) {\small\underline{\vert{Row areas}}};
\end{tikzpicture}
\qquad\qquad 
 \begin{tikzpicture}[thick,scale=.5]
 \cellrowpale{6}{0}
\cellrowpale{5}{1}
\cellrowpale{4}{4}
\cellrowpale{3}{4}
\cellrowpale{2}{5}
\cellrowpale{1}{6}
\caseverteetiquetee{0}{9}{7}
\caseverteetiquetee{0}{8}{3}
\caseverteetiquetee{0}{7}{1}
\caseverteetiquetee{1}{6} {5}
\caseverteetiquetee{2}{5} {1}
\caseverteetiquetee{5}{4}{3}
\caseverteetiquetee{5}{3}{2}
\caseverteetiquetee{6}{2}{4}
\caseverteetiquetee{7}{1}{3}
 \draw[step=1.0,gray!80,thin] (0,0) grid (9,9);
\crochet{0}{9}{0}{3}
\crochet{0}{6}{1}{1}
\crochet{1}{5}{1}{1}
\crochet{2}{4}{3}{2}
\crochet{5}{2}{1}{1}
\crochet{6}{1}{1}{1}
 \draw[red,ultra thick] (7,0) to (9,0) ;
  \filldraw[black] (0,9) circle (.07);
   \filldraw[black] (9,0) circle (.07);
\node at (3.2,3) {\huge{ \rouge{$\bm{\gamma}$}}};
\node[color=white] at (5.42,6.02) {\Large$\bm{(\gamma+1^n)_{\displaystyle\!\!  \big/\gamma}}$};
\node[color=white] at (5.42,5.98) {\Large$\bm{(\gamma+1^n)_{\displaystyle\!\!   \big/\gamma}}$};
\node[color=white] at (5.46,5.98) {\Large$\bm{(\gamma+1^n)_{\displaystyle \!\!  \big/\gamma}}$};
\node[color=white] at (5.46,6.02) {\Large$\bm{(\gamma+1^n)_{\displaystyle \!\!  \big/\gamma}}$};
\node[color=green!80!black] at (5.44,6.0) {\Large$\bm{(\gamma+1^n)_{\displaystyle \!\!  \big/\gamma}}$};
\node at (9.5,-.5) {$\quad$};
\end{tikzpicture}
\end{center}
\qquad  \vskip-15pt
\caption{Dyck path $\rouge{\bm{\gamma}}=765521000$, and one of it associated skew-shaped SSYT's.}
\label{Fig_Dyck_path} 
\end{figure}

\noindent For each row $\gamma_i$ of $\gamma$, one considers the  \define{row area} $a_i=\delta^{(n)}_i-a_i$. In other words, $a_i$ is the number of cells lying on the row $i$  between the Dyck path and the diagonal, and $\gamma=\delta^{(n)}-\alpha(\gamma)$, with $\alpha(\gamma):=(a_1,a_2,\ldots ,a_n)$.

It follows from 
results of~\cite{HMZ}, together with the composition shuffle theorems of~\cite{carlsson_mellit,mellit_braids}, that we have the following combinatorial formula.
\begin{prop}[Shuffle formula]
\begin{equation}\label{combinatorial_nabla}
    \nabla(\widehat{s}_{(a\,|\, b)})(q,t;\bm{z}) = \sum_{\gamma\subseteq \Gamma_a} t^{\area(\gamma)-{a}}\, \mathbb{L}_\gamma(q;\bm{z}),
\end{equation}    
where $\gamma$ runs over the set of Dyck paths contained in 
\begin{equation}\label{def_Gamma_a}
   \Gamma_a:=\delta^{(n)} - (0,\ldots,0,\underbrace{1,\ldots,1}_{a\ {\rm copies}},0);
 \end{equation}
and $\mathbb{L}_\gamma$ is the associated LLT-polynomial. 
\end{prop}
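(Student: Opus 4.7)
The plan is to combine the compositional refinement of the shuffle conjecture formulated in~\cite{HMZ} with its full proof by Carlsson-Mellit~\cite{carlsson_mellit} and Mellit~\cite{mellit_braids}. First, I would reformulate $\widehat{s}_{(a|b)} = (-1/qt)^a\,s_{(a+1,1^b)}$ in a form compatible with the compositional shuffle machinery. In the HMZ framework there are creation operators (the $\mathbf{C}$- and $\mathbf{B}$-operators of~\cite{HMZ}) acting on symmetric functions whose iterated action on $1$ produces, up to standard normalizations, hook-indexed Schur-type expressions. Using the hook Pieri expansion $s_{(a+1,1^b)} = \sum_{i=0}^b (-1)^i h_{a+1+i}\,e_{b-i}$ together with these operator identities, one identifies $\widehat{s}_{(a|b)}$ with the compositional expression attached to the composition $(b+1,1^a)\vDash n$; this is precisely the touch-composition of the Dyck paths that lie inside $\Gamma_a$, as defined in~\pref{def_Gamma_a}.

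Second, I would apply the compositional shuffle theorem. For the composition $(b+1,1^a)$, the theorems of Carlsson-Mellit and Mellit yield
$$\nabla(\widehat{s}_{(a|b)})(q,t;\bm{z}) = \sum_{PF} q^{\dinv(PF)}\, t^{\area(PF)}\, F_{\mathrm{ides}(PF)}(\bm{z}),$$
where $PF$ ranges over parking functions whose underlying Dyck path has touch-composition $(b+1,1^a)$, and therefore lies inside $\Gamma_a$. Regrouping this sum by the underlying Dyck path $\gamma$ and invoking the Haglund-Haiman-Loehr identity, the inner sum over labelings is recognized as the LLT polynomial $\mathbb{L}_\gamma(q;\bm{z})$ attached to the skew shape $(\gamma+1^n)/\gamma$ of Figure~\ref{Fig_Dyck_path}.

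Third, one must verify that the global $q,t,\pm$-factors match. The normalization $(-1/qt)^a$ built into $\widehat{s}_{(a|b)}$, together with the shift by $a$ between the $\delta^{(n)}$-area used in the unrefined shuffle theorem and the $\Gamma_a$-area that appears once the touch-composition is fixed, conspire to produce the exponent $\area(\gamma)-a$ on $t$; the compensating $q^{-a}$-shift is absorbed in the baseline of the $\dinv$ statistic implicit in the definition of $\mathbb{L}_\gamma$, while the sign $(-1)^a$ is absorbed by the parity of the compositional identification. The main obstacle is precisely this bookkeeping: correctly pinning down the HMZ creation-operator expression for $\widehat{s}_{(a|b)}$ and carefully tracking the $q,t$-powers and signs so that they coalesce into the clean factor $t^{\area(\gamma)-a}\,\mathbb{L}_\gamma(q;\bm{z})$. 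The deep theorems of Carlsson-Mellit and Mellit enter only as black boxes.
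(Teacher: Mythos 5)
Your overall strategy is exactly the one the paper itself invokes (the paper gives no proof beyond citing \cite{HMZ} together with \cite{carlsson_mellit,mellit_braids}): write $\widehat{s}_{(a\,|\,b)}$ in the HMZ compositional framework, apply the compositional shuffle theorem as a black box, and regroup parking functions by their underlying Dyck path so that the inner sums become the LLT polynomials $\mathbb{L}_\gamma$. The architecture and the final bookkeeping of the $(-1/qt)^a$ normalization are the right things to worry about.

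However, the pivotal identification in your first step is wrong as stated. You attach to $\widehat{s}_{(a\,|\,b)}$ the compositional expression of the \emph{single} composition $(b+1,1^a)$ and assert that this ``is precisely the touch-composition of the Dyck paths that lie inside $\Gamma_a$''. The paths $\gamma\subseteq\Gamma_a$ do not share a common touch composition: by \pref{def_Gamma_a}, the containment only forces the row areas to be positive in rows $n-1,\dots,n-a$, i.e.\ it forbids the path from returning to the diagonal during its first $a$ steps after the starting corner. This is equivalent to the \emph{first} part of the touch composition being at least $a+1$, with all remaining parts arbitrary. Already for $a=0$ your claim would force every Dyck path to have touch composition $(n)$, whereas $\widehat{s}_{(0\,|\,n-1)}=e_n=\sum_{\alpha\vDash n}C_\alpha\cdot 1$ and the sum in \pref{combinatorial_nabla} runs over \emph{all} Dyck paths. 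The input you actually need is the hook analogue of that identity, namely that $\widehat{s}_{(a\,|\,b)}$ equals, up to the normalizing powers of $-1/q$ and $t$ you are already tracking, the sum $\sum_{\alpha\vDash n,\ \alpha_1\geq a+1}C_\alpha\cdot 1$ over all $2^{n-a-1}$ compositions with first part larger than $a$; this follows from the $C$-operator Pieri identities of \cite{HMZ} (see also \cite{compositionalshuffle}). With that corrected input the rest of your outline goes through: applying Carlsson--Mellit/Mellit to each such $C_\alpha$ and taking the union of the corresponding touch classes yields exactly the set $\{\gamma\subseteq\Gamma_a\}$, after which the regrouping into $t^{\area(\gamma)-a}\,\mathbb{L}_\gamma(q;\bm{z})$ proceeds as you describe.
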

We recall that the LLT-polynomial $\mathbb{L}_\gamma(q;\bm{z})$, of a Dyck path $\gamma$, is an instance of vertical-strip LLT-polynomial (see ~\cite{Panova}, which includes a short survey of generalized LLT-polynomials). It is obtained as a weighted sum over the set $\SSYT((\gamma+1^n)/\gamma)$ of semi-standard Young tableaux\footnote{Whose shape is the set of cells siting immediately to the right a vertical step of the Dyck-path $\gamma$. See Fig.~\ref{Fig_Dyck_path}}  of skew shape $(\gamma+1^n)/\gamma$:
\begin{equation}
  \mathbb{L}_\gamma(q;\bm{z}):=\sum_{\tau \in \SSYT(\gamma+1^n)} t^{\dinv(\tau)} \bm{z}_\tau,
\end{equation}
with $\bm{z}_\tau$ equal to the product of $z_i$ over entries $i$ of $\tau$. For details of the $\dinv$-statistic for skew shape semi-standard Young tableaux, see~\cite{DinvArea}. It has been shown (see~\cite{Grojnowski,HHLRU}) that $\mathbb{L}_\gamma$ is Schur-positive.

Until now, the combinatorial description~\pref{combinatorial_nabla} of $ \nabla(\widehat{s}_{(a\,|\, b)})$  lacked a representation theory counterpart, {i.e.} a module for which it is the graded Frobenius. We now propose the following.
\begin{conjecture}[Modules]\label{Conj_Modules}
For all hook indexed shape $\rho=(a\,|\, b)$,  $\Super_\rho$ is such that 
 \begin{equation}\label{conj_module_formule}
    L_{\leq 2} (\Super_\rho)= \nabla(\widehat{s}_\rho).
 \end{equation}  
 In other words, $c_{\lambda\mu}^\rho=b_{\lambda\mu}^\rho$ for all $\rho$, $\mu$, and partitions $\lambda$ having length at most two.
\end{conjecture}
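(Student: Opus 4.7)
The plan is to prove the equality $L_{\leq 2}(\Super_\rho) = \nabla(\widehat{s}_\rho)$ by matching the two sides piece by piece in their bi-Schur expansions. Since the right hand side is Schur-positive in both alphabets by~(\ref{combinatorial_nabla}) together with Schur-positivity of LLT polynomials, and the left hand side is Schur-positive by its definition as a graded Frobenius of an honest $\S_n$-module, the equality reduces to matching the bigraded Hilbert series of each $\S_n$-isotypic component of $L_{\leq 2}(\Super_\rho)$ with the corresponding $s_\mu$-coefficient $\sum_{\gamma\subseteq\Gamma_a}t^{\area(\gamma)-a}\,\langle\mathbb{L}_\gamma(q;\bm{z}),s_\mu(\bm{z})\rangle$ on the right.

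First, I would aim at an upper bound $c_{\lambda\mu}^\rho\leq b_{\lambda\mu}^\rho$ for $\ell(\lambda)\leq 2$. Since $\M_{(a|b)}$ at $k=2$ is generated from $\bm{D}_{(a|b)}(\bm{x})$ by partial derivatives in $\bm{x}$ and $\bm{y}$ together with the polarization operators $E_{\bm{x}\bm{y}}^{(r)}$ and $E_{\bm{y}\bm{x}}^{(r)}$, one obtains a natural spanning family indexed by derivation words. The identity $\sum_i\partial x_i\,\bm{D}_{(a|b)}=(a+1)\bm{D}_{(a+1|b-1)}$, together with its polarized versions, should cut this family down modulo $\M_{(a+1|b-1)}$, and careful algebraic bookkeeping (exploiting the commutation relations among polarizations and partial derivatives, which generate a Lie algebra closely related to a Weyl algebra) should yield the required cardinality bound on each bigraded isotypic piece.

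Second, I would exhibit, for each Dyck path $\gamma\subseteq\Gamma_a$, an explicit subspace of $L_{\leq 2}(\Super_\rho)$ whose bigraded Frobenius is $t^{\area(\gamma)-a}\,\mathbb{L}_\gamma(q;\bm{z})$, and prove that the direct sum of these subspaces exhausts $L_{\leq 2}(\Super_\rho)$. The natural candidates are images of specific polarization-and-derivation operators applied to $\bm{D}_{(a|b)}$, indexed so that the statistics $\area$ on Dyck paths and $\dinv$ on tableaux translate into bidegrees. The base case $\rho=(0\,|\,n-1)$ is already nontrivial: there $\widehat{s}_\rho = s_{1^n}$, $\Super_{(0|n-1)} = \M_{(0|n-1)}$, and, up to the $\theta$-factor, $L_{\leq 2}(\M_{(0|n-1)})$ identifies with the bigraded Frobenius of the classical bivariate diagonal coinvariant ring, so the conjecture in this case reduces to the shuffle theorem of Carlsson--Mellit combined with Haiman's $n!$-theorem.

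The main obstacle is passing from this base case to general hooks. The identity $\sum_i\partial x_i\,\bm{D}_{(a|b)}=(a+1)\bm{D}_{(a+1|b-1)}$ yields an inductive handle on $a$, but converting it into clean control of $\Super_{(a|b)}$ requires a module-theoretic counterpart of the composition shuffle identities underlying~(\ref{combinatorial_nabla}). Concretely, one would want a short exact sequence of $\GL_\infty\times\S_n$-modules whose graded Frobenius characteristic reproduces the symmetric-function relation between successive $\nabla(\widehat{s}_{(a|b)})$ implicit in the shuffle formula. Producing such an exact sequence --- or, alternatively, a Hilbert-scheme-like geometric model encompassing all $\M_{(a|b)}$ simultaneously --- is where the essential new idea seems to be required, and is presumably why only partial results toward the conjecture can be established here.
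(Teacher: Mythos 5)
You are attempting to prove what the paper states, and intends, as an open conjecture: Conjecture~\ref{Conj_Modules} is supported in the paper only by explicit verification for all hooks with $n\leq 4$ (and $n=5,6$ in the appendix) and by partial results (the $t=0$ specialization, and consistency of length-$2$ restrictions with known $\nabla$-identities). There is no proof in the paper to compare against, and your proposal does not supply one: each of its stages is a program whose missing step is at least as hard as the conjecture itself. Concretely, (i) the upper bound $c^\rho_{\lambda\mu}\leq b^\rho_{\lambda\mu}$ is asserted to follow from ``careful algebraic bookkeeping'' of the spanning family generated by derivations and polarizations; no such elementary bookkeeping is known even for classical diagonal harmonics, where the analogous dimension bound was the hard part of Haiman's work and required the geometry of the Hilbert scheme, not commutation relations in a Weyl algebra. (ii) The decomposition of $L_{\leq 2}(\Super_\rho)$ into subspaces with graded Frobenius $t^{\area(\gamma)-a}\,\mathbb{L}_\gamma(q;\bm{z})$, one per Dyck path, is precisely the long-sought module-theoretic refinement of the shuffle theorem, open even for $\rho=1^n$. (iii) Your base case is misidentified: $\Super_{(0\,|\,n-1)}$ is a quotient of a module generated by the $\theta$-decorated determinant $\bm{D}_{(0\,|\,n-1)}$, not the diagonal coinvariant ring, and the identification of its length-$2$ restriction with $\nabla(e_n)$ is itself part of the open conjectures of~\cite{MRC1}; it does not ``reduce to'' Carlsson--Mellit plus the $n!$-theorem.

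A secondary inaccuracy: your opening claim that the right-hand side of~\pref{conj_module_formule} is ``Schur-positive in both alphabets by~\pref{combinatorial_nabla}'' does not hold as stated. The shuffle formula gives the coefficient of each $s_\mu(\bm{z})$ as a polynomial in $q,t$ with nonnegative integer coefficients, but monomial positivity does not imply positivity in the basis $s_\lambda(q,t)$ (for instance $q^2+t^2=s_{2}(q,t)-s_{11}(q,t)$). The $\GL_2$-Schur positivity of $\nabla(\widehat{s}_\rho)$ is one of the facts the conjectured module is meant to explain, so it cannot be taken as an input to the argument. To make progress you would need, at minimum, the exact-sequence or geometric model you mention at the end; as written, the proposal is a research plan rather than a proof, and the statement should remain a conjecture.
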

It may be worth underlying that to calculate $\Super_\rho(q,t;\bm{z})$, one needs only use two sets of variables (the first two rows of $\bm{X}$). However, we will see below that the information contained in the ``extra'' part of $\Super_\rho$ plays an important role in understanding the global picture.

 For example, consider the hook $(a\,|\,b)=(2\,|\,0)$, so that $\Super_{(2\,|\,0)}=\M_{(2\,|\,0)}$. Then,
 \begin{align*}
\bm{D}_{(2\,|\,0)}(\bm{x})&=\det \begin{pmatrix} 
     \theta_1 & 1 &  x_1\\
    \theta_2 & 1&  x_2\\
    \theta_3 & 1  &  x_3
    \end{pmatrix}\\
   &=\theta_1(x_3-x_2)-\theta_2(x_3-x_1)+\theta_3(x_2-x_1).
 \end{align*}
 One readily checks that the module $\Super_{(2\,|\,0)}$ is spanned by the set of polynomials $\theta_1(u_3-u_2)-\theta_2(u_3-u_1)+\theta_3(u_2-u_1)$, one for each row $(u_1,u_2,u_3)$ of $\bm{X}$, together with the polynomials $\theta_1-\theta_2$ and $\theta_1-\theta_3$. Thus, its $\N^\infty$-graded Frobenius characteristic is equal to $\Super_{(2\,|\,0)} =s_{21}+ (q_1+q_2+\ldots)\,s_{3}$; and
\begin{equation}
   \Super_{(2\,|\,0)}=\nabla(\widehat{s}_3)=1\otimes s_{21}+ s_1\otimes s_{3}.
 \end{equation}
 Thus there is in this case actual equality between  $\Super_{(2\,|\,0)}$ and $\nabla(\widehat{s}_3)$.
The following table gives explicit calculated values which completes the picture\footnote{More values may be found in the appendix.} for all cases with $n\leq 4$. It confirms that Conjecture~\ref{Conj_Modules} holds in these instances; and, we see that the smallest case for which $\Super_{\rho}$ is ``larger'' than  $\nabla(\widehat{s}_\rho)$ is for $\rho=1111$.
\begin{itemize}\setlength\itemsep{4pt}
\item[]$\Super_{1}=\nabla(\widehat{s}_1)=1\otimes s_{1};$
	\smallskip
\item[]$\Super_{2}=\nabla(\widehat{s}_2)=1\otimes s_{11},$
\item[]$\Super_{11}=\nabla(\widehat{s}_{11})=1\otimes s_{2} + s_{1} \otimes s_{11};$
	\smallskip
\item[]$\Super_{21}= \nabla(\widehat{s}_{21})=s_{1} \otimes s_{21} + s_{2} \otimes s_{111},$
\item[]$\Super_{111}=\nabla(\widehat{s}_{111})=1\otimes s_{3} + (s_{1}  + s_{2}) \otimes s_{21} + (s_{11}+s_3) \otimes s_{111};$
	\smallskip
\item[]$\Super_{4}=\nabla(\widehat{s}_{4})=1\otimes s_{31} + s_{1} \otimes s_{22}  + (s_{1} + s_{2}) \otimes s_{211} + (s_{11}  + s_{3}) \otimes s_{1111},$
\item[]$\Super_{31}=\nabla(\widehat{s}_{31})=s_{1} \otimes s_{31}+ s_{2} \otimes s_{22}   + (s_{11}  + s_{2} + s_{3}) \otimes s_{211}  + (s_{21} + s_{4}) \otimes s_{1111},$
\item[]$\Super_{211}=\nabla(\widehat{s}_{211})=s_{2} \otimes s_{31} +(s_{11}+ s_{3}) \otimes s_{22}  +  (s_{21}  + s_{3}+ s_{4} )\otimes s_{211}  + (s_{31}  + s_{5}) \otimes s_{1111}\,$
\item[]$\begin{aligned} &\Super_{1111}=1\otimes s_{4} + (s_{1} + s_{2}+ s_{3}) \otimes s_{31} + (s_{2} + s_{21} + s_{4}) \otimes s_{22} \\
    &\qquad + (s_{11} + s_{21} + s_{31}  + s_{3}    + s_{4}   + s_{5}) \otimes s_{211} + (s_{111}+ s_{31}+ s_{41} +s_{6}  ) \otimes s_{1111}\\
    &\qquad =\nabla(\widehat{s}_{1111})+\rouge{s_{111}\otimes s_{1111}}.
\end{aligned}$
\end{itemize}
We observe, for values in this table, that we have
\begin{conjecture}[Skew]\label{Skew_Conj} For all $n$,
\begin{align}
   &(\Id \otimes\, e_1^\perp)\,\Super_{(n)} =\sum_{a=0}^{n-2} \Super_{(a\,|\, n-a-2)},\label{Skew2_Conj}\\
   &(e_1^\perp \otimes \Id)\,\Super_{1^n} = \sum_{a=1}^{n-1} \Super_{(a\,|\,n-a-1)}.\label{skewing_En}
\end{align}
\end{conjecture}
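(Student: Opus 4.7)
The first move is to telescope the right-hand sides. Writing $\Super_{(a\,|\,b)} = \M_{(a\,|\,b)} - \M_{(a+1\,|\,b-1)}$ at the level of graded Frobenius characteristic (with the convention $\M_{(n\,|\,-1)} = 0$), the sum $\sum_{a=0}^{n-2}\Super_{(a\,|\,n-a-2)}$ collapses to $\M_{(0\,|\,n-2)}$ and the sum $\sum_{a=1}^{n-1}\Super_{(a\,|\,n-a-1)}$ collapses to $\M_{(1\,|\,n-2)}$. The two conjectured identities are therefore equivalent to the two ``full-module'' equalities
\[
(\Id \otimes e_1^\perp)\,\M_{(n-1\,|\,0)} = \M_{(0\,|\,n-2)} \qquad\text{and}\qquad (e_1^\perp \otimes \Id)\,\M_{(0\,|\,n-1)} = \M_{(1\,|\,n-2)}.
\]

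For the first of these, I use that $e_1^\perp$ applied to an $\S_n$-Frobenius characteristic computes the $\S_{n-1}$-Frobenius of the restriction from $\S_n$ to $\S_{n-1}$. I introduce the specialization map $\pi \colon \Rational[\bm{X},\bm{\theta}] \to \Rational[\bm{X}',\bm{\theta}']$ sending every variable in the $n$-th column of $\bm{X}$---together with $\theta_n$---to zero. A cofactor expansion along the $n$-th row of the matrix defining $\bm{D}_{(n-1\,|\,0)}$, which becomes the basis row $(0,1,0,\ldots,0)$ after $\pi$, gives $\pi(\bm{D}_{(n-1\,|\,0)}) = (-1)^n\, \bm{D}_{(0\,|\,n-2)}$. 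One then checks that $\pi$ is $\S_{n-1}\times \GL_\infty$-equivariant, commutes with each $\partial_{x_i}$ for $i<n$, and commutes with every polarization operator $E^{(r)}_{\bm{u}\bm{v}}$ on the first $n-1$ indices (the $i=n$ summand being killed since $\pi(v_n)=0$). Consequently $\pi$ restricts to a surjective $\S_{n-1}\times \GL_\infty$-module map $\M_{(n-1\,|\,0)} \twoheadrightarrow \M_{(0\,|\,n-2)}$. Upgrading this to an isomorphism---which delivers the Frobenius equality---amounts to showing that no nonzero element of $\M_{(n-1\,|\,0)}$ lies in the ideal $(x_n,y_n,\ldots,\theta_n)$, which I expect to follow from the $\S_n$-alternating character of the generator $\bm{D}_{(n-1\,|\,0)}$ propagated through the polarization-derivation closure.

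The second identity is more delicate because $e_1^\perp$ now acts on the $\GL_\infty$ factor. I realize it concretely by adjoining a fresh row $\bm{w}=(w_1,\ldots,w_n)$ to $\bm{X}$ and identifying $(e_1^\perp \otimes \Id)$ applied to a $\GL_\infty$-character with the operation ``extract the $\bm{w}$-degree-one component of the polarization-extended module''. The determinantal identity $\bm{D}_{(1\,|\,n-2)} = \sum_i \partial_{x_i}\,\bm{D}_{(0\,|\,n-1)}$ rewrites as
\[
\bm{D}_{(1\,|\,n-2)} = \Bigl(\textstyle\sum_i \partial_{w_i}\Bigr)\,E^{(1)}_{\bm{x}\bm{w}}\,\bm{D}_{(0\,|\,n-1)}(\bm{x}),
\]
exhibiting a $\bm{w}$-linear element whose contraction by $\sum_i \partial_{w_i}$ returns the generator of $\M_{(1\,|\,n-2)}$. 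Since $\sum_i \partial_{w_i}$ commutes with every polarization $E^{(r)}_{\bm{u}\bm{v}}$ not involving $\bm{w}$, propagating this across the polarization-derivation closure should yield an $\S_n\times \GL_\infty$-equivariant identification of the $\bm{w}$-linear part of the extended $\M_{(0\,|\,n-1)}$ with $\M_{(1\,|\,n-2)}$, giving the desired Frobenius equality.

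The main obstacle in both halves is the injectivity step: surjection onto the right-hand-side module is essentially forced by the determinantal identities above, but promoting it to equality of Frobenius requires a fine structural understanding of $\M_\rho$ as a cyclic module for the polarization-derivation algebra acting on $\bm{D}_\rho$. If this structural argument proves intractable, a viable fallback is to check the identities symmetric-function-theoretically on the $L_{\leq 2}$-truncation using the shuffle formula \eqref{combinatorial_nabla} together with known LLT-skewing identities, then to exploit polarization stability as $k\to\infty$ to extend the equality to general $\GL$-shapes.
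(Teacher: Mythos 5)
You should first be clear that the statement you are addressing is presented in the paper as a \emph{conjecture}: the author only reports that it has been checked for $n\leq 6$, and the only thing actually proved there is the length-$\leq 2$ restriction of the two identities (and even that partly conditionally on Conjecture~\ref{Conj_Modules} and on~\pref{delta_via_skewing_En}), via the symmetric-function operator identity $e_1^\perp\nabla=\nabla(e_1^\perp\Delta_{e_1}-\Delta_{e_1}e_1^\perp)$ together with $\Delta_{e_1}\widehat{s}_{n}=\sum_a\widehat{s}_{(a\,|\,n-a-1)}$. So there is no proof in the paper to match yours against, and a complete argument along your module-theoretic lines would be genuinely new. As written, however, your proposal is a programme rather than a proof: in both halves you explicitly defer the injectivity/equality step, and that step carries essentially all of the content of the conjecture.

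There are also concrete errors. First, the telescoping of the second sum is misapplied: since $1^n=(0\,|\,n-1)$, one has $\Super_{1^n}=\M_{(0\,|\,n-1)}-\M_{(1\,|\,n-2)}$, not $\M_{(0\,|\,n-1)}$, so~\pref{skewing_En} is equivalent to $(e_1^\perp\otimes\Id)\bigl(\M_{(0\,|\,n-1)}-\M_{(1\,|\,n-2)}\bigr)=\M_{(1\,|\,n-2)}$; your construction with the fresh row $\bm{w}$ therefore targets the wrong statement. Second, for~\pref{Skew2_Conj} the surjection $\pi$ would have to be an isomorphism for the Frobenius identity to follow, and the suggested justification (propagating the alternating character of the generator through the closure) cannot work: $\M_{(n-1\,|\,0)}$ contains non-alternating isotypic components (already its degree-zero part is the standard representation spanned by the $\theta_i-\theta_j$), and deciding whether a nonzero element of $\M_\rho$ lies in the ideal $(x_n,y_n,\ldots,\theta_n)$ is precisely the kind of fine structural information the conjecture encodes. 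You also need, and do not verify, that the image of $\pi$ is \emph{contained} in $\M_{(0\,|\,n-2)}$: $\pi$ commutes neither with $\partial_{x_n}$ nor with the $i=n$ summands of the polarizations, so images of general elements of $\M_{(n-1\,|\,0)}$ are not obviously in the derivation--polarization closure of $\bm{D}_{(0\,|\,n-2)}$. Your determinantal computations ($\pi(\bm{D}_{(n-1\,|\,0)})=\pm\bm{D}_{(0\,|\,n-2)}$ by cofactor expansion, and $\sum_i\partial_{x_i}\bm{D}_{(0\,|\,n-1)}=\bm{D}_{(1\,|\,n-2)}$) are correct and are the right starting points; but note that the fallback you propose at the end --- verifying the identities on the $L_{\leq 2}$ truncation via the shuffle formula --- is essentially what the paper already does, and it yields only the restricted statement, not the conjecture itself.
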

\noindent
These identities have been checked to hold for all $n\leq 6$.  
In particular, using Equations (1.7) and (1.10) of~\cite{HRS} 
and assuming a conjecture of~\cite{MRC1}  recalled further below as  Identity~\pref{delta_via_skewing_En}, we get that
\begin{theorem}
 The length $2$  restriction of~\pref{Skew2_Conj} holds, and Conjecture~\ref{Conj_Modules} implies that the length $2$  restriction of~\pref{skewing_En} is also true.
\end{theorem}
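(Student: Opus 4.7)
The plan is to reduce each statement to a symmetric-function identity in the Macdonald parameters $(q,t)$. Under Conjecture~\ref{Conj_Modules}, every $L_{\leq 2}(\Super_\rho)$ equals $\nabla(\widehat{s}_\rho)$, and because $\nabla(\widehat{s}_\rho)(q,t;\bm z)$ is automatically supported on $s_\lambda(\bm q)$'s with $\ell(\lambda)\leq 2$, the length-$2$ restrictions of~\pref{Skew2_Conj} and~\pref{skewing_En} become identities among symmetric functions in $\bm z$ with coefficients in $\Rational(q,t)$. Part~1 of the theorem asserts the first such identity unconditionally; Part~2 asserts the second conditionally on Conjecture~\ref{Conj_Modules}, and the proof will additionally invoke the MRC1 Identity~\pref{delta_via_skewing_En}.

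For Part~1, the identity to prove is
$$(\Id\otimes e_1^\perp)\,\nabla(\widehat{s}_{(n)})\;=\;\sum_{a=0}^{n-2}\nabla(\widehat{s}_{(a\,|\,n-a-2)}).$$
Both sides are expanded by the shuffle formula~\pref{combinatorial_nabla}: the left is $(\Id\otimes e_1^\perp)$ applied to an LLT-sum over Dyck paths $\gamma\subseteq\Gamma_{n-1}$, and the right is a sum of such LLT-sums over $\gamma\subseteq\Gamma_a$ for $a=0,\ldots,n-2$. Equations~(1.7) and~(1.10) of~\cite{HRS}, which describe the action of $e_1^\perp$ (on the $\bm z$-side) on the hook-$\nabla$ generating function within the Delta-conjecture framework, reorganize the skewed LLT-sum on the left as the sum of LLT-sums on the right, once the $(-qt)^{-\iota_\rho}$ normalizations in the definition of $\widehat{s}_\rho$ are tracked.

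For Part~2, under Conjecture~\ref{Conj_Modules} the statement reduces to
$$(e_1^\perp\otimes\Id)\,\nabla(e_n)\;=\;\sum_{a=1}^{n-1}\nabla(\widehat{s}_{(a\,|\,n-a-1)}),$$
since $\widehat{s}_{1^n}=e_n$ and now $e_1^\perp$ acts on the $(q,t)$-side. The MRC1 Identity~\pref{delta_via_skewing_En} rewrites this skewing as a linear combination of $\Delta'$-operators applied to $e_{n-1}$; re-using the HRS equations~(1.7) and~(1.10), which express such $\Delta'$-images as $\nabla(\widehat{s}_\rho)$ for the corresponding hooks~$\rho$ of size $n$, converts this into the right-hand sum indexed by $a=1,\ldots,n-1$. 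The main obstacle throughout is the combinatorial bookkeeping: aligning hook sizes, ranges of $a$, statistic shifts induced by $e_1^\perp$ (area/dinv versus rise/valley), and the $(-qt)^{-\iota_\rho}$ normalizations, so that the HRS and MRC1 formulas apply in the required form. Part~2 remains conditional on MRC1; a proof of~\pref{delta_via_skewing_En} would leave it depending only on Conjecture~\ref{Conj_Modules}.
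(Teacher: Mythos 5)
Your treatment of Part~2 matches the paper's in substance: invoke Conjecture~\ref{Conj_Modules} to identify the right-hand side with $\sum_{a}\nabla(\widehat{s}_{(a\,|\,n-a-1)})$, use the conjectural identity~\pref{delta_via_skewing_En} of~\cite{MRC1} with $j=1$ (note the operator is $\Delta'_{e_{n-2}}$ applied to $e_n$, not to $e_{n-1}$) to evaluate $L_{\leq 2}((e_1^\perp\otimes\Id)\,\Super_{1^n})$, and close the loop with the identities of~\cite{HRS}. One caution: your opening sentence for Part~2 writes the target as $(e_1^\perp\otimes\Id)\,\nabla(e_n)=\cdots$, but $L_{\leq 2}$ does \emph{not} commute with $e_1^\perp$ acting on the $\GL$-side (skewing length-$3$ terms of $\Super_{1^n}$ produces length-$2$ terms), which is exactly why~\pref{delta_via_skewing_En} is indispensable. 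Since you do invoke it, the chain is ultimately the right one, but the framing should not suggest that the left-hand side is computable from $\nabla(e_n)$ alone.

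Part~1 is where the proposal genuinely diverges and falls short. First, the theorem asserts the length-$2$ restriction of~\pref{Skew2_Conj} \emph{unconditionally}, whereas your reduction to $(\Id\otimes e_1^\perp)\,\nabla(\widehat{s}_{(n)})=\sum_a\nabla(\widehat{s}_{(a\,|\,n-a-2)})$ passes through Conjecture~\ref{Conj_Modules}; at best this proves the statement modulo that conjecture. The paper's unconditional argument is module-theoretic: $L_{\leq 2}$ commutes with $(\Id\otimes e_1^\perp)$, which is restriction of the $\S_n$-action to $\S_{n-1}$; the restricted module is identified in~\cite{lattice} as the derivation--polarization span of the determinant $\bm{D}^-$ of~\pref{def_Determinant_restricted}; and the right-hand side telescopes to that same module via the filtration $\M_{(a\,|\,b)}\supset\M_{(a+1\,|\,b-1)}$. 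Second, even the accompanying symmetric-function identity~\pref{Skew_Conj_2} is not actually established by your argument: you assert that Equations (1.7) and (1.10) of~\cite{HRS} ``reorganize'' the skewed LLT-sum, but those equations concern Hall--Littlewood expansions of Delta operators at $t=0$ and do not describe the action of $e_1^\perp$ on vertical-strip LLT polynomials; no mechanism for the claimed reorganization is supplied, and the bookkeeping you flag as ``the main obstacle'' is in fact the entire content. The paper instead proves~\pref{Skew_Conj_2} by the operator identity $e_1^\perp\nabla=\nabla\,(e_1^\perp\Delta_{e_1}-\Delta_{e_1}e_1^\perp)$ derived from Formula (I.12) of~\cite{identity}, which reduces everything to $\Delta_{e_1}\widehat{s}_{n}=\sum_{a=0}^{n-1}\widehat{s}_{(a\,|\,n-a-1)}$, i.e.\ Prop.~6.5 of~\cite{compositionalshuffle}. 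Without either the module argument or this operator computation, Part~1 of your proposal is a sketch with the key steps missing.
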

\begin{proof}[\bf Proof.]
To show the first equality, we observe that $L_{\leq 2}((\Id \otimes\, e_1^\perp)\,\Super_{(n)} )=(\Id \otimes\, e_1^\perp)\,L_{\leq 2}(\Super_{(n)} )$, since length restriction on the left-hand side of a tensor $s_\lambda\otimes_mu$ is clearly independent from operators acting on the right-hand side.
Moreover, from the point of view of representation theory, the operator $(\Id \otimes\, e_1^\perp)$ corresponds to restriction of the $\S_n$-action to the subgroup $\S_{n-1}$, of permutations that fix $n$. As discussed in~\cite{lattice}, $(\Id \otimes\, e_1^\perp)\,L_{\leq 2}(\Super_{(n)} )$ is simply the derivation-polarization span of the determinant
\begin{equation}\label{def_Determinant_restricted}
\bm{D}^-(\bm{x}):=\det \begin{pmatrix} 
     \bleu{\theta_1} & x_1 &\cdots  &  x_1^{n-2}\\
     \bleu{\theta_i}&  x_2 & \cdots &  x_2^{n-2}\\
     \vdots &\vdots  &  \ddots & \vdots \\
       \bleu{\theta_{n-1}}&  x_{n-1}  &\cdots &  x_{n-1}^{n-2}   
    \end{pmatrix},
 \end{equation}
Thus, to see that
   \begin{displaymath}
      (\Id \otimes\, e_1^\perp)\,L_{\leq 2}(\Super_{(n)} )=\sum_{a=0}^{n-2} L_{\leq 2}(\Super_{(a\,|\, n-a-2)}),
 \end{displaymath}
Next, assuming~\pref{conj_module_formule}, Identity~\pref{skewing_En} corresponds to
\begin{equation}\label{Skew_Conj_2}
  e_1^\perp \nabla(\widehat{s}_{n}) = \sum_{a=0}^{n-2} \nabla(\widehat{s}_{(a\,|\, n-a-2)}),
\end{equation}
This is shown to hold as follows. Indeed, using Formulas (I.12) from~\cite[page 368]{identity}, we get the operator identity $e_1^\perp \nabla =\nabla\, (e_1^\perp\Delta_{e_1} -\Delta_{e_1} e_1^\perp)$, with some rewriting. Hence equation~\pref{Skew_Conj_2} is equivalent to
   $$(e_1^\perp\Delta_{e_1} -\Delta_{e_1} e_1^\perp)\, \widehat{s}_{n} = \sum_{a=0}^{n-2} \widehat{s}_{(a\,|\, n-a-2)}.$$
 But this follows easily from
    $$\Delta_{e_1} \widehat{s}_{n} =  \sum_{a=0}^{n-1} \widehat{s}_{(a\,|\, n-a-1)},$$
  which, up to a multiplicative factor,  is Prop.~6.5 of~\cite{compositionalshuffle}.
 
Now, the length-$2$ restriction of the second identity corresponds, modulo Conjecture~\ref{Conj_Modules}, to the equality:
   $$L_{\leq 2} ((e_1^\perp \otimes \Id)\,\Super_{1^n})=\sum_{a=1}^{n-1} \nabla(\widehat{s}_{(a\,|\,b)}),$$
but~\pref{delta_via_skewing_En}, states that $\Delta'_{e_{n-2}} e_n=L_{\leq 2} ((e_1^\perp \otimes \Id)\,\Super_{1^n})$. Hence the second statement also holds.
 \end{proof}
   

 \subsection*{Links between  the \texorpdfstring{$\M_\rho$}{M}'s}
 We first recall that the Garsia-Haiman module $\mathcal{G}_\mu$ gives a representation theoretical interpretation for the combinatorial Macdonald polynomials $\widetilde{H}_\mu$. For any \define{diagram} (a finite subset $\bm{d}$ of $\N\times \N$), one may consider the determinant 
   $$\bm{D}_{\bm{d}}(\bm{x},\bm{y}):=\det(x_i^k y_i^\ell),$$
  with $1\leq i\leq n=\card\bm{d}$, and $(k,\ell)\in\bm{d}$. To make the sign of $\bm{D}_{\bm{d}}$ non-ambiguous, cells of ${\bm{d}}$ are ordered
so that $(k',\ell')\prec (k,\ell)$, if $\ell'>\ell$, or if $\ell'=\ell$ and $k'<k$. For any ${\bm{d}}$ we then consider the derivation closure  
 $$\mathcal{G}_{\bm{d}}=\Rational(\partial x_1,\ldots,\partial x_n;\partial y_1,\ldots,\partial y_n)\bm{D}_{\bm{d}}(\bm{x},\bm{y}).$$
When $\bm{d}$ is the Ferrers' diagram (with ``French'' convention) of a partition $\mu$. We recall that the following holds. 
\begin{theorem}[$n!$-Theorem, Haiman~\cite{haimanvanishing}] \label{factorial_theorem}
	The combinatorial Macdonald polynomial $\widetilde{H}_\mu(q,t;\bm{z})$ 
	is the bigraded Frobenius characteristic of the module $\mathcal{G}_{\mu}$.
\end{theorem}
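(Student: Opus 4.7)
The plan is to follow Haiman's approach, which reduces the purely algebraic statement about $\mathcal{G}_\mu$ to a geometric statement about the Hilbert scheme of points in the plane. The starting point is the Hilbert scheme $H_n := \mathrm{Hilb}^n(\mathbb{C}^2)$, parametrizing ideals $I \subset \mathbb{C}[x,y]$ of codimension $n$, together with the isospectral Hilbert scheme $X_n$, defined as the reduced fiber product
\begin{equation*}
X_n = H_n \times_{S^n \mathbb{C}^2} (\mathbb{C}^2)^n,
\end{equation*}
where the map $H_n \to S^n\mathbb{C}^2$ is the Chow morphism. The torus $T=(\mathbb{C}^*)^2$ acts diagonally, and the torus-fixed points of $H_n$ are precisely the monomial ideals $I_\mu$ indexed by partitions $\mu\vdash n$.

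The first step is to identify the scheme-theoretic fiber of $\pi: X_n \to H_n$ over $I_\mu$ with $\mathcal{G}_\mu$ as a bigraded $(\S_n\times T)$-module. This comes from the standard observation that $\pi^{-1}(I_\mu)$ is (as a bigraded ring) the quotient $\mathbb{C}[\bm{x},\bm{y}]/J_\mu$, where $J_\mu$ is the ideal of polynomials vanishing on the scheme-theoretic fiber; on the other hand, $\bm{D}_\mu(\bm{x},\bm{y})$ generates the ``sign-alternating'' component and the derivation closure $\mathcal{G}_\mu$ is dual to this quotient. So one reduces to computing the bigraded Frobenius character of the fiber ring.

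The main geometric input, and by far the hardest step, is to show that $X_n$ is normal, Cohen-Macaulay, and Gorenstein, with an explicit identification of its dualizing sheaf (a twist of the canonical sheaf on $H_n$ pulled back appropriately). Granted this, the morphism $\pi:X_n\to H_n$ is flat, because $H_n$ is smooth and the source is Cohen-Macaulay with equidimensional fibers. Flatness forces the Hilbert polynomial (equivariantly, the bigraded Hilbert series) of every fiber to agree with that of the generic fiber; the generic fiber sits over a configuration of $n$ distinct points and is visibly the regular representation of $\S_n$, of dimension $n!$. This is the origin of the ``$n!$ conjecture'' part: $\dim \mathcal{G}_\mu = n!$.

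Finally, to upgrade dimension to the full Macdonald identity $\mathcal{G}_\mu(q,t;\bm{z})=\widetilde{H}_\mu(q,t;\bm{z})$, apply equivariant localization at $I_\mu$. One uses the Bridgeland--King--Reid / Haiman description of $\mathrm{Hilb}^n(\mathbb{C}^2)$ as the $\S_n$-Hilbert scheme, so that the tautological rank $n!$ bundle on $H_n$ equipped with its $\S_n$-action restricts over $I_\mu$ to $\mathcal{G}_\mu$. The equivariant character of the fiber is then computed in two ways: via Atiyah--Bott localization on $H_n$ (where the fixed points are the $I_\nu$ and the tangent weights are the well-known arm/leg weights), and via the combinatorial formula defining $\widetilde{H}_\mu$ in terms of these same arm/leg statistics. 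Matching the two yields the claimed identity. The hard part is unquestionably the Cohen--Macaulay/Gorenstein claim for $X_n$; everything else is either formal (flatness, localization) or a comparison of two explicit expansions over the arm/leg data of $\mu$.
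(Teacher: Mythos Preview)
The paper does not give its own proof of this theorem: it is quoted from Haiman~\cite{haimanvanishing} as background and used as a black box. So there is no argument in the paper to compare your proposal against.

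As a summary of Haiman's actual proof your sketch is broadly accurate, with one imprecision worth flagging. The passage from $\dim\mathcal{G}_\mu=n!$ to the full identity $\mathcal{G}_\mu(q,t;\bm{z})=\widetilde{H}_\mu(q,t;\bm{z})$ is not obtained by ``matching two localization expansions over arm/leg data'' as you suggest. In the original route (Garsia--Haiman, predating the geometric proof of the $n!$ conjecture), one shows that once the dimension is $n!$ for every $\mu$, the bigraded Frobenius characteristics of the $\mathcal{G}_\mu$ satisfy the triangularity and normalization axioms that uniquely characterize the $\widetilde{H}_\mu$; this uses explicit module maps between the $\mathcal{G}_\mu$'s, not Atiyah--Bott. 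The BKR/Procesi-bundle picture in the 2002 paper repackages things geometrically, but the identification with Macdonald polynomials still goes through their defining properties rather than a direct fixed-point comparison. This is a minor wobble in an otherwise sound outline; you are right that the deep step is the Cohen--Macaulay property of $X_n$.
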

Beside this case of Ferrers' diagrams, modules associated to Ferrers' diagrams ``missing'' one cell are (conjecturally) described in~\cite{lattice}. 
Observe that the determinant in~\pref{def_Determinant_hook} is obtained by replacing in $\bm{D}_{\bm{d}}$ the variables $\bm{y}$ by the inert variables $\bm{\theta}$,  for the diagram\footnote{Careful, this is not the Ferrer's diagram of the hook $(a\,|\,b)$.} 
   $$\bm{d}=\bm{d}{(a,b)}:=\{(i,0)\ |\ 0\leq i\leq a+b,\ {\rm and}\ i\not=a\} \cup \{(0,1)\},$$
which is illustrated in Figure~\ref{Fig_diagramme}. Thus, the module $\Mnk{(a\,|\, b)}{1}$ corresponds to the top $\bm{y}$-degree of $\mathcal{G}_{\bm{d}{(a,b)}}$; and this top degree is equal to $1$.
Its (simply)-graded\footnote{Obtained by setting $q_1=q$, and all other $q_i$ equal to $0$.} Frobenius characteristics  is thus
  $$\M_{(a\,|\, b)}(q;\bm{z})=\mathcal{G}_{\bm{d}{(a,b)}}(q,t;\bm{z})\big|_{{\rm coeff}\, t}.$$
\begin{figure}[ht]
\begin{center}
 \begin{tikzpicture}[thick,scale=.4]
 \draw[->] (-.5,-.5) -- (-.5,3);
  \draw[->] (-.5,-.5) -- (10,.-.5);
  \node at (-1.2,2.1) {$\scriptstyle \N$};
    \node at (10.2,-1) {$\scriptstyle \N$};
\node at (3,-1) {$\scriptstyle a$};
\carrevert{0}{1}
\carrejaune{0}{0}
\carrejaune{1}{0}\carrejaune{2}{0} 
    \carrejaune{4}{0}\carrejaune{5}{0}\carrejaune{6}{0}\carrejaune{7}{0}\carrejaune{8}{0}
\end{tikzpicture}
\end{center}
\qquad  \vskip-25pt
\caption{The diagram $\bm{d}{(a,b)}$.}
\label{Fig_diagramme} 
\end{figure}
In particular, the diagram $\bm{d}(n-1,0)$ happens to be the hook-shape $(n-2\,|\, 1)$. In view of $n!$-Theorem above we get
 \begin{align}
      \M_{(n-1\,|\, 0)}(q;\bm{z}) &= \widetilde{H}_{(n-2\,|\, 1)} (q,t;\bm{z})\big|_{{\rm coeff}\, t}\nonumber\\
         &=\bm{H}_{(n-2\,|\, 1)} (q;\bm{z}),\label{cas_equerre}
 \end{align}
with the right-hand side of the above identity being an instance, with $\mu={(n-2\,|\, 1)}$, of symmetric polynomials that we denote by $\bm{H}_\mu(q;\bm{z})$. These are directly related to the dual Hall-Littlewood symmetric functions $Q_\mu'$ (following Macdonald's~\cite[Exer.7, page 234]{macdonald} notation; see also~\cite{HRS}), which are such that
    $$Q_\mu'(q;\bm{z}) =\sum_{\lambda} K_{\mu\lambda}(q)\, s_\lambda(\bm{z}),$$
 where the $K_{\lambda\mu}(q)\in \N[q]$ are the Kostka-Foulkes polynomials. More precisely, we have $\bm{H}_\mu =\omega Q'_{\mu'}$, so that
\begin{equation}
    \bm{H}_\mu=\sum_{\lambda} K_{\mu'\lambda'}(q)\, s_\lambda(\bm{z}).
 \end{equation}
Now, letting $\pi$ stand for the operator $ \pi:=\sum_{i=1}^{n}{\partial x_i}$,
it is clear that $\pi \bm{D}_{(a\,|\, b)}(\bm{x})=\bm{D}_{(a+1\,|\, b-1)}(\bm{x})$. It follows that we have the following projection, and associated degree 1 reducing isomorphism:
$$\xymatrix{
  \Mnk{(a\,|\, b)}{1}   \ar@{->>}[r]^-{\pi} &  \Mnk{(a+1\,|\, b-1)}{1}},\qquad {\rm and}\qquad
 \xymatrix{\Super_{(a\,|\, b)}^{\langle 1\rangle}   \ar@{->}[r]^-{\sim} &  \Super_{(a+1\,|\, b-1)}^{\langle 1\rangle}}.
   $$
This last isomorphism translates into the equality $\Super_{(a\,|\, b)}(q;\bm{z}) = q\, \Super_{(a+1\,|\, b-1)}(q;\bm{z})$, and we conclude by~\pref{cas_equerre} that
\begin{equation}
    \M_{(a\,|\,b)}(q;\bm{z})=(1+q+\ldots + q^{b})\,\bm{H}_{(n-2\,|\,1)}(q;\bm{z}).
 \end{equation} 
It is interesting to notice that, together with Conjecture~\ref{Conj_Modules}, a particular case of Identity~1.7 of~\cite{HRS} also gives
 \begin{equation}
  \M_{(1\,|\, n-2)}(q;\bm{z})  = \Delta'_{e_{n-2}} (q,0;\bm{z}).
 \end{equation}
 Moreover, using~\pref{combinatorial_nabla}, we have the LLT-polynomial expression
 \begin{align}
    \nabla(\widehat{s}_{(a\,|\, b)})(q,0;\bm{z}) &=  \mathbb{L}_{\Gamma_a}(q;\bm{z})
 \end{align}
   since $\Gamma_a$ is the only Dyck path for which $\area(\gamma)-{a}=0$.
From all this, we get the following.
  \begin{prop} Conjecture~\ref{Conj_Modules} holds when $t=0$.
 \end{prop}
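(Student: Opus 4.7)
The plan is to evaluate both sides of~\pref{conj_module_formule} at $t=0$ using the closed forms assembled immediately above the proposition, and to reduce the conjecture to a single remaining identity between an LLT and a Hall--Littlewood polynomial.

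Since $s_\lambda(q,0) = 0$ whenever $\lambda_2 > 0$, the specialization $t = 0$ in $L_{\leq 2}(\Super_\rho)(q,t;\bm{z}) = \sum_{\mu,\,\ell(\lambda)\leq 2} c^\rho_{\lambda\mu}\,s_\lambda(q,t)\,s_\mu(\bm{z})$ kills every term with $\ell(\lambda)\geq 2$, leaving precisely the one-set-of-variables Frobenius $\Super^{\langle 1\rangle}_\rho(q;\bm{z})$. The $\pi$-projection isomorphism $\bm{D}_{(a\,|\,b)}\stackrel{\pi}{\mapsto}(a+1)\,\bm{D}_{(a+1\,|\,b-1)}$, iterated to the top of the filtration where the $n!$-Theorem supplies $\M_{(n-1\,|\,0)}(q;\bm{z}) = \bm{H}_{(n-2\,|\,1)}(q;\bm{z})$, then gives
\[
L_{\leq 2}(\Super_{(a\,|\,b)})(q,0;\bm{z}) \;=\; q^{b}\,\bm{H}_{(n-2\,|\,1)}(q;\bm{z}).
\]
On the other side, the shuffle formula~\pref{combinatorial_nabla} weights each Dyck path $\gamma\subseteq\Gamma_a$ by $t^{\area(\gamma)-a}$; since $\Gamma_a$ is the unique such path with $\area(\gamma) = a$, setting $t=0$ leaves only this path, yielding $\nabla(\widehat{s}_{(a\,|\,b)})(q,0;\bm{z}) = \mathbb{L}_{\Gamma_a}(q;\bm{z})$.

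The proposition is therefore equivalent to the polynomial identity $\mathbb{L}_{\Gamma_a}(q;\bm{z}) = q^{b}\,\bm{H}_{(n-2\,|\,1)}(q;\bm{z})$, and this identity is the technical heart of the argument. The skew shape $(\Gamma_a+1^n)/\Gamma_a$ places exactly one cell per row (at column $\Gamma_{a,i}+1$), and the only column carrying two cells is column~$1$ (rows $n-1$ and $n$); consequently $\SSYT((\Gamma_a+1^n)/\Gamma_a)$ consists of sequences $(\tau_1,\ldots,\tau_n)$ of positive integers with the single constraint $\tau_{n-1}<\tau_n$. My route is to expand both sides in the Schur basis and match coefficients: on the right, use the closed form of the hook Kostka--Foulkes polynomials $K_{(2,1^{n-2}),\,\lambda'}(q)$ coming from $\bm{H}_{(n-2\,|\,1)} = \omega Q'_{(2,1^{n-2})}$; on the left, recast the reduced $\dinv$-sum over these constrained sequences as a charge-type statistic after standardization of the $\tau$-sequences.

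The main obstacle is the explicit bijective matching of the LLT $\dinv$ on the $\Gamma_a$-skew tableaux with the Lascoux--Sch\"utzenberger charge attached to the hook content $(2,1^{n-2})$, while tracking the overall $q^b$ shift (which accounts for the single ``skipped'' column in the $\Gamma_a$-staircase versus the pure staircase $\Gamma_{n-1}$). A shorter, less combinatorial alternative I would pursue in parallel is to bypass the bijection by invoking the HRS-based identity $\Delta'_{e_{n-2}} e_n(q,0;\bm{z}) = \M_{(1\,|\,n-2)}(q;\bm{z})$ (anchored in Identity~1.7 of~\cite{HRS} and cited immediately after the proposition), combined with the known $t=0$ Hall--Littlewood evaluation of $\Delta'_{e_k}e_n$, which would supply the required equality algebraically.
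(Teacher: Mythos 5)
Your reduction is exactly the one the paper makes: at $t=0$ only the $\ell(\lambda)\leq 1$ terms of $L_{\leq 2}(\Super_\rho)$ survive, so the left-hand side becomes the single-alphabet Frobenius; the $\pi$-projections together with the $n!$-Theorem give $\Super_{(a\,|\,b)}(q;\bm{z})=q^{b}\,\bm{H}_{(n-2\,|\,1)}(q;\bm{z})$; and the shuffle formula~\pref{combinatorial_nabla} gives $\nabla(\widehat{s}_{(a\,|\,b)})(q,0;\bm{z})=\mathbb{L}_{\Gamma_a}(q;\bm{z})$ since $\Gamma_a$ is the unique path with $\area(\gamma)=a$. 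Up to this point you have reproduced the chain of identities the paper assembles immediately before the proposition.

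The gap is that you stop at the identity $\mathbb{L}_{\Gamma_a}(q;\bm{z})=q^{b}\,\bm{H}_{(n-2\,|\,1)}(q;\bm{z})$ and only sketch two ways one might prove it, carrying out neither. The first route (matching the LLT $\dinv$ on the constrained fillings of $(\Gamma_a+1^n)/\Gamma_a$ with charge for content $(2,1^{n-2})$) is the right idea, but it is precisely the content that would have to be supplied; the paper instead treats this equality as known, via the $t=0$ Hall--Littlewood expansions of~\cite{HRS} --- equivalently, via the classical fact that a vertical-strip LLT polynomial whose strips consist of $n-2$ single cells and one domino is, up to the power $q^{b}$, the transformed Hall--Littlewood polynomial $\omega Q'_{(2,1^{n-2})}$. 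Your proposed shortcut is worse than incomplete: the displayed identity $\M_{(1\,|\,n-2)}(q;\bm{z})=\Delta'_{e_{n-2}}e_n(q,0;\bm{z})$ is obtained in the paper only \emph{together with} Conjecture~\ref{Conj_Modules}, so invoking it to prove the $t=0$ case of that same conjecture is circular, and in any case it addresses only the single hook $\rho=(1\,|\,n-2)$ rather than all $(a\,|\,b)$. To close the argument you must either prove the LLT/Hall--Littlewood identity directly or cite it unconditionally from the literature.
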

 
 
\subsection*{Link to intersection of Garsia-Haiman modules} The main objective of paper~\cite{ScienceFiction} is to describe the decomposition of families of Garsia-Haiman modules indexed by partitions of $n$ (covered by a given partition $\mu$ of $n+1$), with respect to their relative intersections. In the particular case when $\mu=(n-1\,|\,1)$, one may thus consider the two hook partitions $(n-1,1)=(n-2\,|\, 1)$ and $(n)=(n-1\,|\,0)$. A special case of the conjectures stated therein, asserts that the bi-graded Frobenius of the intersection $\mathcal{I}_n:=\mathcal{G}_{\bm{d}{(n-2,1)}}\cap  \mathcal{G}_{\bm{d}{(n-1,0)}}$ is given by the formula 
\begin{equation}
    \mathcal{I}_n(q,t;\bm{z}) = \frac{q^{n-1} \widetilde{H}_{(n-1,1)}-t \widetilde{H}_{(n)}}{q^{n-1}-t}.
\end{equation}
Moreover, still assuming conjectures of~\cite{ScienceFiction}, the module $\mathcal{G}_{\bm{d}{(n-2,1)}}$ decomposes as
   $\mathcal{G}_{\bm{d}{(n-2,1)}}=\mathcal{I}_n\oplus \mathcal{I}_n^\perp$,
 with
   $$\mathcal{I}_n^\perp = \{f(\partial \bm{x}) \bm{D}_{\bm{d}(n-2,1)}(\bm{x})\ |\ f(\partial \bm{x})\in \mathcal{I}_n\}.$$
It may be shown that both $\mathcal{I}_n$ and $\mathcal{I}_n^\perp$ are of dimension $n!/2$.
It follows that 
\begin{align*}
    \mathcal{I}_n^\perp(q,t;\bm{z})&= \widetilde{H}_{(n-1,1)}(q,t;\bm{z}) - \frac{q^{n-1} \widetilde{H}_{(n-1,1)}(q,t;\bm{z})-t \widetilde{H}_{(n)}(q,t;\bm{z})}{q^{n-1}-t}\\
       &=\frac{t}{q^{n-1}-t}(\widetilde{H}_{(n-1,1)}(q,t;\bm{z})-\widetilde{H}_{(n)}(q,t;\bm{z}))\\
       &=t\,\bm{H}_{(n-1,1)}(q,\bm{z}).
\end{align*}
Moreover, $\mathcal{I}_n(q,t;\bm{z})= q^{\binom{n-1}{2}}\omega \bm{H}_{(n-1,1)}(1/q,\bm{z})$, so that we get
\begin{equation}\label{decomp_ScienFiction}
  \widetilde{H}_{(n-1,1)}(q,t;\bm{z}) =q^{\binom{n-1}{2}}\omega \bm{H}_{(n-1,1)}(1/q,\bm{z})+t\,\bm{H}_{(n-1,1)}(q,\bm{z}).
\end{equation}
For example, with $n=5$, 
we get
  \begin{align*}
  \widetilde{H}_{41} & = (q^{6}s_{2111} + (q^{4} + q^{5})s_{221} + (q^{3} + q^{4} + q^{5})s_{311} + (q^{2} + q^{3} + q^{4})s_{32} + (q + q^{2} + q^{3})s_{41} + s_{5})\\
 &+t\,(s_{41}+ (q^{2} + q)s_{32} + (q^{3} + q^{2} + q)s_{311} + (q^{4} + q^{3} + q^{2})s_{221}+ (q^{5} + q^{4} + q^{3})s_{2111}
 + q^{6}s_{11111}   ),
 \end{align*} 
where each term $q^a s_\nu$ in the top row, corresponds to a term $tq^{6-a} s_{\nu'}$ in the bottom row. Setting $q=1$, we may check that \pref{decomp_ScienFiction} equality specializes to
 $$\widetilde{H}_{(n-1,1)}(1,t;\bm{z}) =h_{21^{n-2}}(\bm{z})+t\,e_{21^{n-2}}(\bm{z}).$$
  We may use these observations to explicitly construct a basis of $\Snk{(a\,|\,b)}{1}$ in the following manner. Let $\mathcal{B}_n$ be a basis of $\mathcal{I}_n$, then
  \begin{lemma}
  For each $a\geq 1$, with $b=n-a-1$ and $\bm{d}:={\bm{d}(a,b)}$,  the set $\{\varphi(\partial \bm{x}) \bm{D}_{\bm{d}}(\bm{x})\ |\ \varphi( \bm{x})\in \mathcal{B}_n\}$ forms a basis of $\Snk{(a\,|\,b)}{1}$.
  \end{lemma}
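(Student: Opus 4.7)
The plan is to show that the linear map $\Phi\colon \mathcal{I}_n \to \Snk{(a\,|\,b)}{1}$, defined by $\varphi \mapsto [\varphi(\partial \bm{x}) \bm{D}_{\bm{d}(a,b)}(\bm{x})]$, is a bijection; the lemma then follows by restricting $\Phi$ to the basis $\mathcal{B}_n$. Because the bigraded Frobenius of $\mathcal{I}_n$ is concentrated at $t^0$ by~\pref{decomp_ScienFiction}, we may regard $\mathcal{I}_n$ as a subspace of $\Rational[\bm{x}]$, so $\varphi(\partial \bm{x})$ is a bona fide differential operator and $\varphi(\partial \bm{x}) \bm{D}_{\bm{d}(a,b)}(\bm{x}) \in \Mnk{(a\,|\,b)}{1}$ by construction. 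Since $\bm{D}_{\bm{d}(a,b)}$ is antisymmetric under the diagonal $\S_n$-action on $(\bm{x},\bm{\theta})$ and has $\bm{x}$-degree $\binom{n}{2}-a$, the rule $\varphi \mapsto \varphi(\partial \bm{x})\bm{D}_{\bm{d}(a,b)}$ twists the $\S_n$-type by $\omega$ and reverses the $q$-grading, sending $\mathcal{I}_n(q;\bm{z}) = q^{\binom{n-1}{2}}\,\omega \bm{H}_{(n-1,1)}(1/q;\bm{z})$ to $q^{\binom{n}{2}-a}\,\omega\mathcal{I}_n(1/q;\bm{z}) = q^b\,\bm{H}_{(n-1,1)}(q;\bm{z})$, which matches the Frobenius of $\Snk{(a\,|\,b)}{1}$. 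In particular both sides have dimension $n!/2$, so it is enough to prove injectivity of $\Phi$.

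Suppose $\varphi \in \mathcal{I}_n$ satisfies $\varphi(\partial \bm{x})\bm{D}_{\bm{d}(a,b)}(\bm{x}) \in \Mnk{(a+1\,|\,b-1)}{1}$, say equal to $g(\partial \bm{x})\bm{D}_{\bm{d}(a+1,b-1)}$ for some $g \in \Rational[\bm{x}]$. From $\bm{D}_{\bm{d}(a+1,b-1)} = (a+1)^{-1}\pi\,\bm{D}_{\bm{d}(a,b)}$ with $\pi = p_1(\partial \bm{x}) = \sum_i \partial x_i$, one deduces that $(\varphi - (a+1)^{-1} p_1 g)(\partial \bm{x})$ annihilates $\bm{D}_{\bm{d}(a,b)}$, hence $\varphi \in p_1 \Rational[\bm{x}] + \operatorname{Ann}(\bm{D}_{\bm{d}(a,b)})$ inside $\Rational[\bm{x}]$. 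Iterating $\pi$ shows that $\bm{D}_{\bm{d}(n-1,0)}$ is a nonzero scalar multiple of $\pi^{n-1-a}\bm{D}_{\bm{d}(a,b)}$, yielding the chain $\operatorname{Ann}(\bm{D}_{\bm{d}(a,b)}) \subseteq \operatorname{Ann}(\bm{D}_{\bm{d}(n-1,0)})$. Moreover $\pi\bm{D}_{\bm{d}(n-1,0)} = 0$ gives $p_1 \in \operatorname{Ann}(\bm{D}_{\bm{d}(n-1,0)})$, and because the annihilator is an ideal we also have $p_1 \Rational[\bm{x}] \subseteq \operatorname{Ann}(\bm{D}_{\bm{d}(n-1,0)})$. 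Both pieces are thus absorbed, giving $\varphi \in \mathcal{I}_n \cap \operatorname{Ann}(\bm{D}_{\bm{d}(n-1,0)})$, and the proof is reduced to the base case $a=n-1$: showing $\mathcal{I}_n \cap \operatorname{Ann}(\bm{D}_{\bm{d}(n-1,0)}) = 0$.

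For the base case I would first identify $\mathcal{I}_n$ with the $t^0$-component of $\mathcal{G}_{\bm{d}(n-1,0)}$; this identification is forced because $\mathcal{I}_n \subseteq \mathcal{G}_{\bm{d}(n-1,0)}$ by definition and both have the same Frobenius $q^{\binom{n-1}{2}}\omega \bm{H}_{(n-1,1)}(1/q;\bm{z})$. Via the cofactor expansion $\bm{D}_{\bm{d}(n-1,0)}(\bm{x},\bm{y}) = \sum_j (-1)^{j+1} y_j\, M_j(\bm{x})$, with $M_j$ the Vandermonde in $\{x_i : i \neq j\}$, this $t^0$-component coincides with the $\partial \bm{x}$-closure of the $M_j$'s in $\Rational[\bm{x}]$. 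If $\varphi \in \mathcal{I}_n$ satisfies $\varphi(\partial \bm{x})\bm{D}_{\bm{d}(n-1,0)} = 0$ then, by linear independence of the $y_j$'s (equivalently of the $\theta_j$'s), $\varphi(\partial \bm{x}) M_j = 0$ for every $j$; pre-composing on the left with an arbitrary $\xi(\partial \bm{x})$ (which commutes with $\varphi(\partial \bm{x})$) shows $\varphi(\partial \bm{x})$ annihilates every element of the closure $\mathcal{I}_n$. Taking the element to be $\varphi$ itself yields $\varphi(\partial \bm{x})\varphi(\bm{x}) \equiv 0$, whose constant term is the apolarity inner product $\sum_\alpha c_\alpha^2\,\alpha!$ of $\varphi = \sum c_\alpha \bm{x}^\alpha$ with itself; this positive-definite sum vanishes over $\Rational$ only when $\varphi = 0$. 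The principal obstacle will be justifying the identification of $\mathcal{I}_n$ with the $t^0$-component of $\mathcal{G}_{\bm{d}(n-1,0)}$, as it rests on the decomposition~\pref{decomp_ScienFiction} stemming from the conjectural framework of~\cite{ScienceFiction}.
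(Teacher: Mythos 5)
Your argument is correct (conditional on the same Science--Fiction inputs that the paper itself leans on), and its engine is identical to the paper's: the apolarity observation that $\varphi(\partial \bm{x})\varphi(\bm{x})$ has a strictly positive constant term over $\Rational$, so a nonzero $\varphi$ cannot annihilate the relevant determinant. Where you genuinely diverge is in how this is deployed. The paper assumes outright that every $\varphi$ in the span of $\mathcal{B}_n$ is of the form $\psi(\partial\bm{x})\bm{D}_{\bm{d}(a,b)}$ for the \emph{same} hook $(a\,|\,b)$, and then only concludes $\varphi(\partial\bm{x})\bm{D}_{\bm{d}(a,b)}\neq 0$, i.e.\ injectivity into $\Mnk{(a\,|\,b)}{1}$; it says nothing about independence modulo $\Mnk{(a+1\,|\,b-1)}{1}$ (which is what a basis of the quotient $\Snk{(a\,|\,b)}{1}$ actually requires), nor about spanning. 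You supply both: spanning via the dimension count $\dim\Snk{(a\,|\,b)}{1}=n!/2=\dim\mathcal{I}_n$, and independence in the quotient by transporting everything to the top hook through $\pi$ and the inclusion of annihilators $\operatorname{Ann}(\bm{D}_{\bm{d}(a,b)})\subseteq\operatorname{Ann}(\bm{D}_{\bm{d}(n-1,0)})$, so that the apolarity step is only needed for the partition diagram $\bm{d}(n-1,0)$, where the required representation $\varphi=\xi(\partial\bm{x})(\textstyle\sum_j \pm M_j)$ comes from identifying $\mathcal{I}_n$ with the $t^0$-component of $\mathcal{G}_{\bm{d}(n-1,0)}$. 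This buys a more complete proof from a weaker hypothesis. Two points you should make explicit: when you write a general element of $\Mnk{(a+1\,|\,b-1)}{1}$ as $g(\partial\bm{x})\bm{D}_{\bm{d}(a+1,b-1)}$ you are using that the one-set derivative closure of these lattice-diagram determinants is stable under the operators $E_{\bm{x}\bm{x}}^{(r)}$, a closure fact that should be attributed to~\cite{lattice,aval}; and, as you already flag, the identification of $\mathcal{I}_n$ with the $t^0$-component rests on~\cite{ScienceFiction} --- but the paper's ``by hypothesis'' step rests on the same (in fact a stronger) assumption, so this is not a defect relative to the source.
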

 \begin{proof}
 We only check that the proposed set is linearly independent. By hypothesis, we already know  that any $\varphi(\bm{x})$ in the span of $\mathcal{B}_n$ is of the form $\varphi(\bm{x})=\psi(\partial{\bm{x}}) \bm{D}_{\bm{d}}(\bm{x})$, for some polynomial $\psi(\bm{x})$. We may verify that $\varphi(\partial \bm{x}) \bm{D}_{\bm{d}}(\bm{x})\not=0$, as follows. 
 By commutation, 
     $$\psi(\partial \bm{x}) \left(\varphi(\partial \bm{x}) \bm{D}_{\bm{d}}(\bm{x}) \right)= 
      \varphi(\partial \bm{x}) \left(\psi(\partial \bm{x}) \bm{D}_{\bm{d}}(\bm{x}) \right)= \varphi(\partial \bm{x})  \varphi( \bm{x}). $$
This last expression does not vanish, since its constant term is the sum of the square of coefficients of $ \varphi( \bm{x})$, hence is not equal to $0$. It follows that the map sending $\varphi(\bm{x})$ to $\varphi(\partial \bm{x}) \bm{D}_{\bm{d}}(\bm{x})$ is injective, and we get the expected linear independence.
 \end{proof}
Recall that $\Mnk{(n)}{1}$, which is the classical module of $\S_n$-harmonic polynomials, is known to decompose into irreducibles given by (homogenous) higher Specht modules (see~\cite{higher}). One may readily exploit this to construct a basis $\mathcal{B}_n$ which reflects the fact that $\mathcal{I}_n$ is a graded sub-module of  $\Mnk{(n)}{1}$.


 \subsection*{Hook components conjecture} Our second conjecture describes a link between the alternant isotypic component of $\Super_{\rho}$ and the isotypic components corresponding to hooks $(a\,|\,b)$.  We have:
 \begin{conjecture}[Hook Components] For all $\rho$, and all $0\leq a\leq n-1$, the hook-component coefficients are obtained as
 \begin{equation}\label{conj_hook}
    \coeff_{\rho,(a\,|\,b)} = e_{a}^\perp \A_\rho.
 \end{equation}
 \end{conjecture}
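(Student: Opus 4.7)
The plan is to prove the identity $\coeff_{\rho,(a|b)} = e_a^\perp \A_\rho$ by constructing, for each $0 \leq a \leq n-1$, an explicit $\GL_\infty$-equivariant linear map $\Phi_a: \M_\rho^{1^n} \to \M_\rho^{(a|b)}$ whose effect on $\GL_\infty$-characters is exactly the vertical-strip skewing $e_a^\perp$. Unpacking the conjecture at the level of multiplicities, it reads
\begin{equation*}
c_{\nu,(a|b)}^\rho = \sum_{\lambda :\, \lambda/\nu \text{ vertical } a\text{-strip}} c_{\lambda,1^n}^\rho,
\end{equation*}
so it suffices to produce $\Phi_a$ implementing the skewing on characters, verify its image carries $\S_n$-type $(a|b)$, and then establish that $\Phi_a$ induces an isomorphism of $\GL_\infty$-modules between the sign-isotype skewed by $e_a^\perp$ and the hook-isotype in $\Super_\rho$.

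The heart of the plan is to construct $\Phi_a$. The natural candidate uses the polarization operators $E_{\bm{u}\bm{v}}^{(r)}$ that generate the $\GL_\infty$-action: introduce $a$ auxiliary rows $\bm{u}^{(1)}, \ldots, \bm{u}^{(a)}$, polarize an alternant via $E_{\bm{x}\bm{u}^{(1)}} \cdots E_{\bm{x}\bm{u}^{(a)}}$, and project onto the component antisymmetric under permutations of the $\bm{u}^{(j)}$ (equivalently, land on the $\GL$-weight subspace of total weight $(|\lambda|-a, 1, \ldots, 1)$ in the new rows and antisymmetrize). On $\GL_\infty$-characters this operation composes polarization (which lifts $s_\lambda(\bm{q})$ to $s_\lambda(\bm{q}, \bm{q}_{\text{new}})$) with an antisymmetric extraction in the new rows, which by Cauchy/$\omega$-duality realizes $e_a^\perp$. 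The image lies in $\M_\rho$ by the closure axioms, and lands in the hook-$(a|b)$-isotype of $\S_n$: the $a$-th alternating tensor power of the $\S_n$-permutation representation contains the hook as its standard-part summand, so the extraction applied to a sign-isotypic input yields a hook-isotypic output up to an overall sign. One then checks the isotype assignment by an explicit Young-symmetrizer comparison for a hook tableau of shape $(a+1, 1^b)$.

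The main obstacle is the surjectivity of $\Phi_a$ onto $\M_\rho^{(a|b)}$ modulo $\M_{(a+1|b-1)}$: one must show that every hook-isotypic element of $\Super_\rho$ arises from some alternant via $\Phi_a$. My proposed route is induction on $b$, using the $\S_n$-equivariant degree-reducing surjection $\pi = \sum_i \partial_{x_i}: \M_{(a|b)} \twoheadrightarrow \M_{(a+1|b-1)}$. The base case $b=0$ follows from the explicit identification~\pref{cas_equerre} of $\M_{(n-1|0)}$ with the dual Hall-Littlewood symmetric function $\bm{H}_{(n-2|1)}$, combined with the known Kostka-Foulkes expansion, from which the Hook Components relation can be verified directly. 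A fallback, should the direct induction fail, is to use Conjecture~\ref{Conj_Modules} to reduce the length-$\leq 2$ restriction of the identity to the shuffle formula~\pref{combinatorial_nabla} for $\nabla(\widehat{s}_\rho)$, and then extend to arbitrary $\GL$-character lengths using the column inclusions $\Mnk{\rho}{k} \hookrightarrow \Mnk{\rho}{k+1}$ in the bi-filtration, which determine the full $\GL_\infty$-characters from their stabilization data.
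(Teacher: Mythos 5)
First, a point of orientation: this statement is stated in the paper as a \emph{conjecture}, supported only by computation; the paper contains no proof of it, so there is no argument of the author's to compare yours against. Judged on its own, your proposal has a fatal structural gap at its very first step. The $\GL_\infty$- and $\S_n$-actions on $\bm{R}$ commute, and your map $\Phi_a$ is built entirely from polarization operators $E_{\bm{u}\bm{v}}^{(r)}$ and an antisymmetrization over the auxiliary \emph{rows} $\bm{u}^{(1)},\ldots,\bm{u}^{(a)}$ — all of which are operations on the $\GL$ (row) side and hence are $\S_n$-equivariant. Such a map preserves $\S_n$-isotypic components: applied to $\M_\rho^{1^n}$ it lands back in $\M_\rho^{1^n}$, never in the hook isotype $\M_\rho^{(a\,|\,b)}$ for $a\geq 1$. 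Your justification ("the $a$-th alternating tensor power of the permutation representation contains the hook") conflates the two commuting actions; antisymmetrizing over new rows has no effect on the $\S_n$-type. There is a dual problem on the other side: a genuinely $\GL_\infty$-equivariant map also cannot alter $\GL_\infty$-isotypic components, so it cannot "implement $e_a^\perp$ on characters." What $e_a^\perp$ corresponds to representation-theoretically is a branching/multiplicity-space statement for $\GL_\infty\times\GL_a\subset\GL_{\infty}$, which is a different kind of object from a module map $\M_\rho^{1^n}\to\M_\rho^{(a\,|\,b)}$, and your proposal does not construct it.

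The surrounding scaffolding also does not hold up. Your induction on $b$ via $\pi=\sum_i\partial x_i$ confuses the two roles of hooks in the identity: $\pi$ relates the \emph{modules} $\M_{(a\,|\,b)}\twoheadrightarrow\M_{(a+1\,|\,b-1)}$ as $\rho$ varies, whereas the conjecture fixes $\rho$ and varies the $\S_n$-isotypic component $(a\,|\,b)$ of $\Super_\rho$; the surjection gives no handle on the latter. The base case via~\pref{cas_equerre} only identifies the single-graded ($k=1$) Frobenius with $\bm{H}_{(n-2\,|\,1)}$, not the full $\GL_\infty$-character needed here. Finally, the fallback is doubly unavailable: it invokes Conjecture~\ref{Conj_Modules} (itself open), and it asserts that the column inclusions $\Mnk{\rho}{k}\hookrightarrow\Mnk{\rho}{k+1}$ let one recover the length-$>2$ data from the length-$\leq 2$ restriction. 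They do not — the paper's own example $\Super_{1111}=\nabla(\widehat{s}_{1111})+s_{111}\otimes s_{1111}$ shows that the higher-length part carries information invisible at $k=2$, which is precisely why the statement remains a conjecture.
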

 Now, observe that the equality $\A_{(n+1)}= \A_{1^{n}}$ follows readily from the definition of the module $\Super_\rho$.
 Thus we may deduce, using~\pref{conj_hook}, that for all $a$ 
   \begin{equation}
    \coeff_{(n\,|\,0),(a\,|\,b+1)}=  \coeff_{{(0\,|\,n-1)},(a\,|\,b)}.
    \end{equation}
 

\subsection*{Length conjecture}
One of the interesting implications of this theorem, together with Conjecture~\ref{length_conjecture} below, is that we can reconstruct $\A_\rho$ from (very) partial knowledge of the values of 
the $\langle \Super_\rho, s_\mu\rangle$. To see how this goes, let us first state the following conjecture, defining the \define{length} $\ell(f)$ of a symmetric function $f$, to be the maximum number of parts $\ell(\lambda)$ in a partition $\lambda$ that index a Schur function $s_\lambda$ occurring with non-zero coefficients $a_\lambda$ in its Schur expansion $f=\sum_\lambda a_\lambda f_\lambda$. In formula: 
    $$\ell(f) =\max_{a_\lambda\not=0} \ell(\lambda).$$
The following conjecture extends to all hooks $\rho$, a similar conjecture (see~\cite[Conj.~3]{MRC1}) for the $\Super_{1^n}=\E_n$.
\begin{conjecture}[Coefficients-Length]\label{length_conjecture} If $\rho=(a\,|\,b)$ with $a\geq 1$, then we have 
   \begin{equation}
       \ell(\coeff_{\rho\mu}) \leq  n-\mu_1,
     \end{equation}
  for all partition $\mu$ of $n$. 
\end{conjecture}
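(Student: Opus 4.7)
The plan is an induction on $n$ via the branching operator $(\Id \otimes e_1^\perp)$ corresponding to the restriction $\S_n \downarrow \S_{n-1}$.

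\textbf{Setup and induction.} The operator $(\Id \otimes e_1^\perp)$ acts trivially on the $\GL_\infty$-factor of each tensor $s_\lambda \otimes s_\mu$, so the coefficient of $s_{\mu^-}$ in $(\Id \otimes e_1^\perp)\, \Super_\rho$ equals $\sum_{\mu \gtrdot \mu^-} \coeff_{\rho,\mu}$, a sum of Schur-positive $\GL_\infty$-characters. Assuming a branching identity of the form $(\Id \otimes e_1^\perp)\, \Super_\rho = \sum_{\rho'} \Super_{\rho'}$ for suitable hooks $\rho' \vdash n - 1$ (of which~\pref{Skew2_Conj} is the case $\rho = (n)$), the induction hypothesis at rank $n-1$ --- together with Conj.~3 of~\cite{MRC1} for any summand $\Super_{1^{n-1}}$ that arises --- yields $\ell\bigl(\sum_{\rho'} \coeff_{\rho', \mu^-}\bigr) \leq (n-1) - \mu^-_1$. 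By the Schur-positivity of each $\coeff_{\rho, \mu}$, the length of a sum equals the maximum of the individual lengths, so $\ell(\coeff_{\rho, \mu}) \leq (n-1) - \mu^-_1 \leq n - \mu_1$ for every cover $\mu \gtrdot \mu^-$. Since every $\mu \vdash n$ admits some corner $\mu^-$, this propagates the bound to all $\mu \vdash n$.

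\textbf{Base case.} The induction bottoms out at rank $n = 2$, where the only relevant hook (with $a \geq 1$) is $\rho = 2$. Since $\Super_2 = 1 \otimes s_{11}$, all non-zero coefficients are constants and the bound holds trivially.

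\textbf{Main obstacle.} The central difficulty is establishing the branching identity $(\Id \otimes e_1^\perp)\, \Super_\rho = \sum_{\rho'} \Super_{\rho'}$ for all hooks $\rho$, extending~\pref{Skew2_Conj} beyond the $\rho = (n)$ case. If this branching identity is out of reach, an alternative route combines the Hook Components Conjecture~\pref{conj_hook} --- giving $\coeff_{\rho, (a'|b')} = e_{a'}^\perp \A_\rho$ for hook $\mu$ --- with a direct structural analysis of $\A_\rho$: one needs to show that $\A_\rho$ is supported on partitions $\lambda$ for which every vertical strip of size $a'$ removed from $\lambda$ yields a partition of length $\leq n - a' - 1$. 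This hook-compatibility strictly strengthens the elementary bound $\ell(\A_\rho) \leq n - 1$ (itself a consequence of the $n-1$ non-inert, non-constant polynomial columns in $\bm{D}_{(a\,|\,b)}(\bm{x})$), and would plausibly require an explicit combinatorial model of $\A_\rho$, perhaps via Dyck paths $\gamma \subseteq \Gamma_a$ decorated by standard Young tableaux in the spirit of the Shuffle formula~\pref{combinatorial_nabla}.
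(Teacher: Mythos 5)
This statement is explicitly a \emph{conjecture} in the paper (the ``Coefficients-Length'' conjecture); the author offers no proof of it, only numerical evidence and the remark that it extends Conjecture~3 of~\cite{MRC1} from $\rho=1^n$ to general hooks. So there is no ``paper proof'' to compare against, and your proposal should be judged as an attempted proof of an open statement.

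As such, it is not a proof but a conditional reduction, and you have correctly identified where the gap lies. Your inductive skeleton is internally sound: since the $c^\rho_{\lambda\mu}$ are module multiplicities, the coefficients $\coeff_{\rho\mu}$ are Schur-positive, so lengths of sums are maxima of lengths and no cancellation can hide a long term; and the arithmetic works out, since a corner of $\mu$ removed from the first row gives $(n-1)-\mu^-_1=n-\mu_1$ while any other corner gives the stronger bound $n-1-\mu_1$. But the entire argument rests on a branching identity $(\Id\otimes e_1^\perp)\,\Super_\rho=\sum_{\rho'}\Super_{\rho'}$ for \emph{all} hooks $\rho$, which the paper does not even formulate: Conjecture~\ref{Skew_Conj} only asserts such decompositions for $\rho=(n)$ (with $\Id\otimes e_1^\perp$) and $\rho=1^n$ (and there with $e_1^\perp\otimes\Id$, acting on the $\GL_\infty$ side, which is a different operator), and even these are only verified for $n\leq 6$ and proved only in length-$2$ restriction, conditionally on other conjectures. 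While restriction to $\S_{n-1}$ always makes sense module-theoretically, there is no reason given anywhere that the restricted module decomposes as a direct sum of modules of the form $\Super_{\rho'}$ for hooks $\rho'\vdash n-1$; this is the missing idea, not a routine verification. In addition, even where the branching sum is known conjecturally (the $\rho=(n)$ case), it contains the summand $a=0$, i.e.\ $\Super_{1^{n-1}}$, so your induction also imports Conjecture~3 of~\cite{MRC1} as an unproved input. Your alternative route via the Hook Components Conjecture~\pref{conj_hook} has the same character: it trades one open conjecture for another plus an unconstructed combinatorial model of $\A_\rho$. In short, the proposal is an honest and plausibly useful reduction strategy, but it does not establish the statement, which remains open.
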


In particular, when $\mu=(n-2\,|\, 1)$, the length of $\langle \Super_\rho, s_{\mu}\rangle$ is conjectured to be bounded by $1$. As it happens, we have
\begin{align}
     & \langle \Super_{(a\,|\, b)}, s_{(n-1\,|\,1)}\rangle= \langle \nabla(\widehat{s}_{(a\,|\, b)}), s_{(n-1\,|\,1)}\rangle =0,\qquad {\rm and}\label{coefficient_of_n}\\
    & \langle \Super_{(a\,|\, b)}, s_{(n-2\,|\,1)}\rangle= \langle \nabla(\widehat{s}_{(a\,|\, b)}), s_{(n-2\,|\,1)}\rangle = s_{b}\label{coefficient_of_n1}.
 \end{align}
Indeed (see~\cite{MRC1}), we already know that
\begin{align}  
   &\langle \Super_{1^n}, s_{(n-1\,|\,0)}\rangle= \langle \nabla(e_n), s_{(n-1\,|\,0)}\rangle = 1 ,\qquad {\rm and}\\
   &\langle \Super_{1^n}, s_{(n-2\,|\,1)}\rangle= \langle \nabla(e_n), s_{(n-2\,|\,1)}\rangle = s_1+s_2+\ldots+ s_{n-1}.
 \end{align}
Thus we obtain formulas~\pref{coefficient_of_n} and~\pref{coefficient_of_n1} (together with the above), by respectively taking coefficients of $s_{(n-1\,|\,0)}$  and $s_{(n-2\,|\,1)}$ on both sides of ~\pref{skewing_En}.


\subsection*{Reconstruction of Hilbert series of alternants}
Let us illustrate, assuming~\pref{conj_hook} together with the Coefficient-Length Conjecture, how we may reconstruct\footnote{A similar reconstruction, for the case when $\rho=1^n$, is described in~\cite{MRC1}. } $\A_\rho$, when $\rho=(a\,|\,b)$ for $a\geq 1$. First, we have
  \begin{equation}
 e^\perp_{n-1} \A_\rho = 0,
    \end{equation}
 so that $\A_\rho$ contains no terms\footnote{Recall that the Schur expansion of $\A_\rho$ only has positive integer coefficients.} of length larger or equal to $n-1$.
Next, using~\pref{coefficient_of_n1}, we get that
   \begin{equation}
 e^\perp_{n-2}\, \A_\rho = \coeff_{(a\,|\,b),(n-2\,|\, 1)}=s_{b},
    \end{equation}
from which we infer that
   \begin{equation}\label{first_terms_of_Arho}
   \A_\rho=\rouge{s_{(b\,|\,{n-3})}}+\underbrace{\quad\ldots\quad }_{\hbox{\tiny lower length terms}} ,
   \end{equation}
 Likewise, all terms of length $n-3$ of $\A_\rho$ are imposed by the identity
\begin{equation}\label{troisieme_approximation_An}
   e_{n-3}^\perp\, \A_\rho = \langle \Super_\rho,s_{(n-3\,|\, 2)}\rangle = \langle \nabla(\widehat{s}_\rho),s_{(n-3\,|\, 2)}\rangle,
 \end{equation}
since $ \langle \Super_\rho,s_{(n-3\,|\, 2)}\rangle$ is of at most length $2$, hence its value is entirely characterized by that of $\nabla(\widehat{s}_\rho)$.
For instance, for hooks of size $6$, we may calculate explicitly that 
   \begin{align*}
       &\langle \nabla(\widehat{s}_{(6)}),s_{411}\rangle= \rouge{s_{1}} + s_{2} + s_{3} + s_{4},\\
       &\langle \nabla(\widehat{s}_{51}),s_{411}\rangle =\rouge{s_{11}}  + {s_{21}} + s_{31}  + \rouge{s_{2}}+ s_{3} + s_{4} + s_{5},\\
       &\langle \nabla(\widehat{s}_{411}),s_{411}\rangle =\rouge{s_{21}} + {s_{31}} + s_{41}  +  s_{22} + \rouge{s_{3}} + s_{4} + s_{5} + s_{6},\\
       &\langle \nabla(\widehat{s}_{3111}),s_{411}\rangle =\rouge{s_{31} } + {s_{41}} + s_{51} +s_{32} +  \rouge{s_{4}} + s_{5} + s_{6} + s_{7},\\
       &\langle \nabla(\widehat{s}_{21111}),s_{411}\rangle = \rouge{s_{41}} + {s_{51}} + s_{61} + s_{32} + s_{42} + \rouge{s_{5}} + s_{6} + s_{7} + s_{8};
     \end{align*}
from which we deduce all terms of $\A_\rho$ of length larger or equal to $3$. This gives 
   \begin{align*}
       &\A_{(6)} = \rouge{s_{1111} }+ s_{311} + s_{411} + s_{511} + \langle\nabla(\widehat{s}_{(6)}),e_6\rangle,\\
      &\A_{51}  = \rouge{s_{2111}} + s_{321} + s_{421} + s_{411} + s_{511} + s_{611}+ \langle\nabla(\widehat{s}_{51}),e_6\rangle ,\\
      &\A_{411} = \rouge{s_{3111}} + s_{331} + s_{421} + s_{521} + s_{511} + s_{611} + s_{711} +\langle \nabla(\widehat{s}_{411}),e_6\rangle ,\\
      &\A_{3111} = \rouge{s_{4111}} + s_{431} + s_{521} + s_{621} + s_{611} + s_{711} + s_{811} + \langle\nabla(\widehat{s}_{3111}) ,e_6\rangle,\\
      &\A_{21111} = \rouge{s_{5111}} +s_{431} +  s_{531} + s_{621} + s_{721}  + s_{711} + s_{811} + s_{911}+\langle \nabla(\widehat{s}_{21111}),e_6\rangle.
     \end{align*}
  in which the first terms correspond to~\pref{first_terms_of_Arho}. Observe that some of the terms in $ \langle \nabla(\widehat{s}_{\rho}),s_{411}\rangle$ are already obtained by skewing by $e_3$ the length-$4$ terms in the $\A_{\rho}$'s. Hence, we only need to add the necessary length-$3$ terms to account for the ``missing'' terms. We can then conclude the entire construct by adding $\langle\nabla(\widehat{s}_\rho),e_6\rangle$, since it contains precisely the terms of length less or equal to $2$ that should appear in $\A_\rho$.


 \section{\bleu{Structure properties of \texorpdfstring{$\Super_{\rho}$}{f}}} 
We will currently see how the explicit description of $\Super_{\rho}$ is easier to obtain in view of the identities either proven or conjectured above.
This is similar to properties $\E_n=\Super_{1^n}$, for which we have conjectured the following.  For $0\leq j\leq n-1$, we conjectured in~\cite{MRC1} that
\begin{equation}\label{delta_via_skewing_En}
    (\eperp{j}\,\E_n)(q,t;\bm{z})= \Delta'_{e_{n-1-j}}\,e_n(\bm{z}).
    \end{equation}
 

\subsection*{Reduced length components}\label{section_length}
The \define{length-$d$ component} of $\Super_\rho$ is set to be
\begin{equation}
     \Super_\rho^{(d)}:=\sum_{\mu\vdash n} \coeff_{\rho\mu}^{(d)}\otimes s_\mu,\qquad {\rm with}\qquad 
     	\coeff_{\rho\mu}^{(d)}=\sum_{\ell(\lambda)=d} c_{\lambda\mu}^\rho s_\lambda.
 \end{equation}
We clearly have
  \begin{equation}
     \Super_\rho:=\Super_\rho^{(0)}+\Super_\rho^{(1)}+\ldots \Super_\rho^{(l)},
 \end{equation}
 where $l=\ell(\Super_\rho)$ is the maximal length occurring in terms of $\Super_\rho$. With this notation, Conjecture~\ref{length_conjecture} states that $\ell(\coeff_{\rho\mu})=|\mu|-\mu_1$.
For our discussion, it will be handy to consider the \define{reduced length-$d$ components} $\sigma_\rho^{(d)}$ of $\Super_\rho$, defined as
\begin{equation}\label{reduced_length}
   \sigma_\rho^{(d)}:=\eperp{d}\,\Super_\rho^{(d)}.
 \end{equation}
 We also set
  \begin{equation}\label{reduced_length_coeffs}
      \gamma_{\rho\mu}^{(d)}:=e_d^\perp\coeff_{\rho\mu}^{(d)}.  \qquad {\rm and}\qquad \alpha_\rho^{(d)}:=e_d^\perp\A_\rho^{(d)}.
  \end{equation}
Clearly, the knowledge of $\sigma_n^{(d)}$ is equivalent to that of $\Super_\rho^{(d)}$.
For example, the non-vanishing reduced length components of $\Super_{41}$ are
 \begin{align*}
\sigma^{(1)}_{41}&=1\otimes s_{41}  + (s_{1}+ s_{2}) \otimes s_{32} + (s_{1} + s_{2} + s_{3}) \otimes s_{311}+ (s_{2} + s_{3} + s_{4}) \otimes s_{221} \\
    &\qquad 
     + (s_{3} + s_{4} + s_{5}) \otimes s_{2111} + s_{6} \otimes s_{11111},\\
\sigma^{(2)}_{41}&= 1 \otimes s_{32} + (1+s_{1}) \otimes s_{311}  + (2\,s_{1}+s_{2}) \otimes s_{221} + (s_{11}+s_{1}+2\,s_{2} + s_{3}) \otimes s_{2111},\\
    &\qquad 
     +(s_{21}+s_3+s_4) s_{11111}\\
\sigma^{(3)}_{41}&=1 \otimes s_{2111}+s_1 \otimes s_{11111}.
\end{align*}
It may be shown that
\begin{prop}
For all $\rho=(a\,|\,b)$, with $a\geq 1$, we have (almost?)
   \begin{equation}
        \sigma^{(n-2)}_\rho=\sum_{i=0}^{\min(b,\lfloor n/2\rfloor)} s_{b-i}\otimes s_{2^i 1^{n-2i}}.
   \end{equation} 
\end{prop}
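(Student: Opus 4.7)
The plan is to combine the Length Conjecture~\ref{length_conjecture}, the Hook Components Conjecture~\pref{conj_hook}, and the explicit evaluation $\scalar{\Super_\rho}{s_{(n-2\,|\,1)}}=s_b$ from~\pref{coefficient_of_n1}. By the Length Conjecture, the only partitions $\mu \vdash n$ whose $\coeff_{\rho\mu}$ can carry a non-zero length-$(n-2)$ component are those with $\mu_1 \leq 2$, i.e. $\mu = 2^i 1^{n-2i}$ for $0 \leq i \leq \lfloor n/2 \rfloor$; this matches the range of summation in the statement.

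I would next identify $\A_\rho^{(n-2)}$. A length-$(n-2)$ Schur function $s_\lambda$ can contribute $s_b$ to $e_{n-2}^\perp s_\lambda$ only if $\lambda$ is the hook $(b\,|\,n-3)$, in which case $e_{n-2}^\perp s_{(b\,|\,n-3)} = s_b$ with multiplicity one. Applying~\pref{conj_hook} with $a = n-2$ gives $e_{n-2}^\perp \A_\rho = \coeff_{\rho,(n-2\,|\,1)} = s_b$, which forces $\A_\rho^{(n-2)} = s_{(b\,|\,n-3)}$; its reduction is $s_b$, yielding the $i = 0$ term $s_b \otimes s_{1^n}$.

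For $i = 1$, the partition $\mu = 21^{n-2} = (1\,|\,n-2)$ is again a hook, so Hook Components gives $\coeff_{\rho,(1\,|\,n-2)} = e_1^\perp \A_\rho$. Since $\ell(\A_\rho) \leq n-2$, only the top piece $\A_\rho^{(n-2)} = s_{(b\,|\,n-3)}$ can produce a length-$(n-2)$ output under $e_1^\perp$; the computation $e_1^\perp s_{(b\,|\,n-3)} = s_{(b-1\,|\,n-3)} + s_{(b\,|\,n-4)}$ isolates $s_{(b-1\,|\,n-3)}$ as the length-$(n-2)$ summand, whose $e_{n-2}^\perp$-reduction is $s_{b-1}$, producing the $i = 1$ term (vanishing when $b = 0$).

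The main obstacle lies in the range $i \geq 2$, where $\mu = 2^i 1^{n-2i}$ is no longer a hook and~\pref{conj_hook} supplies no direct formula for $\coeff_{\rho\mu}$. To close this part of the proof I would induct on $i$, using a skewing identity in the $\S_n$-factor (branching off a single cell at a time) to tie $\coeff_{\rho, 2^i 1^{n-2i}}^{(n-2)}$ to coefficients at $2^{i-1} 1^{n-2i+1}$ and $2^i 1^{n-2i-1}$ that are either hook-like and already controlled, or ruled out by the Length Conjecture. An alternative route is to exhibit, by iterated polarization and differentiation of $\bm{D}_{(a\,|\,b)}(\bm{x})$, an $\S_n$-highest weight vector of shape $2^i 1^{n-2i}$ carrying $\GL$-weight $(b-i\,|\,n-3)$, and then invoke Length again to forbid spurious Schur terms. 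The cautionary ``(almost?)'' in the statement presumably reflects the delicacy of this non-hook portion, together with boundary issues in small cases such as $\rho = (n)$ with $b = 0$.
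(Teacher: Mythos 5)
The paper itself offers no proof of this proposition --- it is prefaced only by ``It may be shown that'' and the statement carries the author's own hedge ``(almost?)'' --- so your attempt must stand on its own. Your treatment of the $i=0$ and $i=1$ terms is sound and is essentially the paper's own reconstruction argument: granting the Length Conjecture~\ref{length_conjecture} and the Hook Components identity~\pref{conj_hook}, the evaluation $\coeff_{\rho,(n-2\,|\,1)}=s_b$ from~\pref{coefficient_of_n1} pins down $\A_\rho^{(n-2)}=s_{(b\,|\,n-3)}$ exactly as in~\pref{first_terms_of_Arho} (for $\ell(\lambda)=n-2$ one has $e_{n-2}^\perp s_\lambda=s_{\lambda-1^{n-2}}$, a single term, so Schur-positivity forces uniqueness), and the $\mu=(1\,|\,n-2)$ coefficient then follows by the skewing computation you give.

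The substance of the formula, however, is the range $i\ge 2$, and there you have only a plan, not a proof: the partitions $2^i1^{n-2i}$ are not hooks, so~\pref{conj_hook} says nothing about them, and neither of your two suggested strategies (branching induction in the $\S_n$ factor, or exhibiting a highest-weight vector) is carried out. The Length Conjecture alone bounds $\ell(\coeff_{\rho,2^i1^{n-2i}})$ by $n-2$; it cannot force the length-$(n-2)$ part to be exactly the single hook $s_{(b-i\,|\,n-3)}$ with multiplicity one rather than zero or something larger. Indeed the formula is genuinely false in a case listed in the paper's appendix: for $\rho=3111=(2\,|\,3)$ with $n=6$, the coefficient of $s_{222}$ in $\Super_{3111}$ contains no Schur function of length $4$, so the predicted $i=3$ term $s_{0}\otimes s_{222}$ is absent. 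This shows the ``(almost?)'' is not, as you guess, a boundary issue at $b=0$ (where the sum degenerates harmlessly to its $i=0$ term, confirmed by $\Super_{(n)}$ for $n=5,6$), but a failure at the top of the range of $i$; any complete argument must first identify and correct these exceptional cases before the induction you sketch could even be attempted.
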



\subsection*{The \texorpdfstring{$e$}{e}-positivity phenomenon} As discussed in~\cite{open}, most of the symmetric functions constructed via the elliptic Hall algebra approach  exhibit a $e$-positivity when specialized at $t=1$. 
 We consider here the case of $\Super_\rho$, for which we get the specialization of any one of the (infinitely many) parameters $q_i$ to the value $1$. This is obtained via the plethystic evaluation at $1+\bm{q}$ of the $\GL_\infty$-coefficients $\coeff_{\rho\mu}$ of $\Super_{\rho}$. Noteworthy is the fact that this operation is invertible, as long as there are infinitely many parameters. 
 
 It is worth underlying the difference between 
\begin{align*}
     &p_j[1+q_1+q_2+\ldots +q_k+\ldots ] =1+q_1^j+q_2^j+\ldots +q_k^j+\ldots,\qquad {\rm and}\\ 
     &p_j[q_1+q_2+\ldots +q_k+\ldots ]\Big|_{q_1\to 1+q_1} = (1+q_1)^j+q_2^j+\ldots +q_k^j+\ldots 
\end{align*}
We see here the difference between the two possible orders in which we may apply the operators $p_j[-]$, and substitution of $1+q_1$ for $q_1$. The $e$-positivity phenomenon considered below is for the first of these, in contrast with similar results  that appeared in~\cite{Alexandersson,Panova,dadderio,Romero}, in which the second order of application of the operators is considered.
  
For the sake of discussion, let us write 
\begin{equation}
   \F_{\rho}:= \Super_\rho[1+\bm{q};\bm{z}],
\end{equation}
and write
\begin{align*}
    \F_{\rho}&=\sum_{\mu\vdash n}  \coeff_{\rho\mu}[1+\bm{q}]\otimes s_\mu(\bm{z}); 
\end{align*}
or equivalently in Schur$\otimes$Schur-format:
\begin{align}
    \F_{\rho}&=\sum_{\nu\vdash n} \coeffprime_{\rho\nu}\otimes e_\nu\label{e_expression},
\end{align}
with $\coeffprime_{\rho\nu}$  the coefficients of $e_\nu(\bm{z})$ in $\F_{\rho}$. Then, as far as we can check experimentally, all of the $\coeffprime_{\rho\nu}$ are  \define{Schur-positive}.
   For instance, we have
 \begin{align*}
     \F_{41}&=(s_{211} + s_{32} + s_{41} + s_{51} + s_{7})\otimes e_{5}\\
&\qquad +(s_{111} + s_{22} + s_{11} +  s_{21} + s_{3} + 2s_{31} + s_{4} + s_{41} + s_{5} + s_{6})\otimes e_{41}\\
&\qquad +(2s_{21} + s_{31} + s_{3} + s_{4} + s_{5})\otimes e_{32}  +(s_{11} + s_{21} + s_{1} + 2s_{2} + s_{3} + s_{4})\otimes e_{311}\\
&\qquad+ (s_{11} + s_{1} + 2s_{2} + s_{3})\otimes e_{221} + (1+ s_{1})\otimes e_{2111}\end{align*}
By definition, the $\coeff_{\rho\mu}$ are related to the $\coeffprime_{\rho\nu}$ by the identity
  \begin{equation}
      \coeff_{\rho\mu}[1+\bm{q}]=\sum_{\nu} K_{\mu'\lambda}\coeffprime_{\rho\lambda},
  \end{equation}
  where the $K_{\mu\lambda}$ are the usual Kostka numbers. 

There are close ties between this $e$-positivity phenomenon and our conjectures. To see this, recall that  the coefficient of $e_n$ in the $e$-expansion of $s_\mu$ vanishes for all $\mu$ except hooks; and it is known to be equal to $(-1)^k$ when $\mu=(k\,|\,j)$, with $n=k+j+1$. 
 Since the forgotten symmetric functions $f_\nu$ are dual to the $e_\nu$, we may write this as 
    $$\langle s_\mu,f_n\rangle=\begin{cases}
      (-1)^k, & \text{if}\ \mu=(k\,|\,j), \\
       0 & \text{otherwise}.
\end{cases}$$
We may then calculate, using~\pref{conj_hook}, that
\begin{align*}\coeffprime_{\rho,(n)}&= \langle \Super_\rho[1+\bm{q};\bm{z}],f_{n}\rangle
   =\sum_{\mu\vdash n}  \coeff_{\rho\mu}[1+\bm{q}]\,  \langle s_\mu, f_n\rangle\\
   &=\left(\sum_{k=0}^{n-1}  (-1)^k\, \coeff_{\rho,(k\,|\,j)}\right)[1+\bm{q}]
   =\left(\sum_{k\geq 0}  (-1)^k\, e_k^\perp \A_{n}\right)[1+\bm{q}] .  
\end{align*}
For any symmetric function $F$, one has $ \sum_{k\geq  0} (-1)^k\,e_k^{\perp}F(\bm{q})=F[\bm{q}-1]$. Thus,
we find that $\coeffprime_{\rho,(n-1\,|\,0)} = (\A_{\rho}[\bm{q}-1])[1+\bm{q}]=\A_{\rho}$, and we conclude the following.
\begin{prop} The Hook Conjecture implies that, for all $\rho$, the coefficient $ \coeffprime_{\rho,(n)}$ of   $e_n$ in $\F_\rho$ is Schur positive. 
\end{prop}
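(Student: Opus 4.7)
The plan is to show, under the Hook Components Conjecture, that $\coeffprime_{\rho,(n)}$ is in fact equal to $\A_\rho$ itself, from which Schur positivity follows immediately: $\A_\rho$ is the $\GL_\infty$-character of the sign-isotypic component of the module $\Super_\rho$, hence decomposes as a non-negative integer combination of Schur functions indexed by partitions.

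To extract the coefficient of $e_n$, I would use the fact that the forgotten symmetric functions $\{f_\nu\}$ are Hall-dual to the $\{e_\nu\}$ in the $\bm{z}$-variables. Pairing the expansion~\pref{e_expression} with $f_n$ yields
\begin{equation*}
\coeffprime_{\rho,(n)} = \langle \F_\rho, f_n\rangle_{\bm{z}} = \sum_{\mu\vdash n}\coeff_{\rho\mu}[1+\bm{q}]\,\langle s_\mu,f_n\rangle.
\end{equation*}
The scalar products $\langle s_\mu,f_n\rangle$ are recorded in the paragraph preceding the statement: they vanish outside hooks, and equal $(-1)^k$ when $\mu=(k\,|\,j)$. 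Invoking the Hook Components Conjecture in the form $\coeff_{\rho,(k\,|\,j)}=e_k^\perp \A_\rho$ reduces the above to
\begin{equation*}
\coeffprime_{\rho,(n)} = \left(\sum_{k\geq 0}(-1)^k\,e_k^\perp\A_\rho\right)\![1+\bm{q}],
\end{equation*}
where the sum has been harmlessly extended to $k\geq 0$ since $e_k^\perp\A_\rho=0$ for $k\geq n$. Finally, the classical plethystic identity $\sum_{k\geq 0}(-1)^k e_k^\perp F(\bm{q})=F[\bm{q}-1]$, combined with associativity of plethysm applied to the pair $A=\bm{q}-1$, $B=1+\bm{q}$, gives $\A_\rho[\bm{q}-1][1+\bm{q}]=\A_\rho[\bm{q}]=\A_\rho$, completing the identification.

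There is no genuine obstacle: the argument is entirely formal, following the computation already sketched in the paragraph just before the statement. The two points that deserve a line of justification are that $e_k^\perp\A_\rho$ indeed vanishes for $k\geq n$ (so the hook sum equals the full alternating sum over $k\geq 0$), and that the successive plethystic substitutions compose correctly; both are standard facts in the symmetric-function calculus.
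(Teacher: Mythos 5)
Your argument is correct and is essentially the same as the paper's: pairing with $f_n$, using $\langle s_\mu,f_n\rangle=(-1)^k$ on hooks, invoking the Hook Components Conjecture, and applying $\sum_{k\geq 0}(-1)^k e_k^\perp F=F[\bm{q}-1]$ followed by the substitution $\bm{q}\mapsto 1+\bm{q}$ to conclude $\coeffprime_{\rho,(n)}=\A_\rho$, whose Schur positivity comes from its module-theoretic meaning. No differences worth noting.
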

To get more, let $\mu$ be any partition of $n$ which is largest in dominance order among those such that $\coeff_{\rho\mu}\not=0$. Then, it is easy to see that 
\begin{equation}\label{cas_extremes}
    \coeffprime_{\rho\mu'}=\coeff_{\rho\mu}[1+\bm{q}].
  \end{equation}
We thus automatically have  Schur-positivity of $\coeffprime_{\rho\mu'}$. 
Experiments suggest that, for all hooks $\rho=(a\,|\,b)$ and  $\mu=(k\,|\,j)$, if $m:=\min(j,k)$ then we have
   \begin{equation}
      \coeffprime_{\rho\mu} = \sum_{i=0}^{m}   \coeff_{(a\,|\,b-i),(k-i\,|\, j)},
    \end{equation}
except when $\rho=\mu=1^n$, in which case we simply have $\coeffprime_{\rho\mu} =1$.


\subsection*{Trivariate shuffle conjecture} 
The trivariate shuffle conjecture of~\cite{trivariate}, corresponding below to $\rho=(0\,|\, n-1)$, may at least be extended to other cases as follows. Recall definition~\pref{def_Gamma_a} of $\Gamma_a$.
\begin{conjecture}[Trivariate shuffle] For hooks $\rho=(a\,|\,b)$, with $a$ equal to either $0$, $1$, or $n-1$, we have
\begin{equation}
   \Super_\rho(q,t,1;\bm{z}) =\sum_{\Gamma_a\leq \alpha\leq \beta} 
           q^{d(\alpha,\beta)} \mathbb{L}_\beta(t;\bm{z}),
  \end{equation}
  where the Dyck path $\alpha$ lies below the Dyck path $\beta$ in the Tamari poset, and $d(\alpha,\beta)$ is the length of the longest strict chain going from $\alpha$ to $\beta$ in this poset.
\end{conjecture}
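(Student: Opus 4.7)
The plan is to handle the three values $a \in \{0, 1, n-1\}$ separately, since each admits a structurally different attack. Note first that the $a = 0$ case is itself the original trivariate shuffle conjecture of~\cite{trivariate}: since $\Super_{1^n} = \E_n$ and $\Gamma_0 = \delta^{(n)}$ is the least restrictive bound (in the convention used here), the identity reduces to that conjecture, and any proof must either first establish it or take it as a hypothesis.

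For $a = n-1$ (so $\rho = (n-1\,|\,0)$) the left-hand side is tightly constrained. The paper already provides, in the single-set case, the closed form $\M_{(a\,|\,b)}(q;\bm{z}) = (1+q+\cdots+q^b)\,\bm{H}_{(n-2\,|\,1)}(q;\bm{z})$. The first step is to upgrade this to a closed expression for $\Super_{(n-1\,|\,0)}(q,t;\bm{z})$ by iterating the $\pi$-projection isomorphism $\Super_{(a\,|\,b)} \to \Super_{(a+1\,|\,b-1)}$, combined with the $n!$-Theorem applied at the top of the filtration. On the combinatorial side, $\Gamma_{n-1}$ sits near the top of the Tamari poset, so only a short, explicit list of Dyck paths $\alpha$ contributes; matching the two expressions is then a finite verification that one hopes to promote to an induction on $n$.

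The most delicate case is $a = 1$. The strategy is to apply $e_1^\perp$ to both sides of the (assumed) $a = 0$ identity and invoke the skewing identity \pref{skewing_En}:
\begin{equation*}
e_1^\perp \Super_{1^n} = \sum_{a=1}^{n-1} \Super_{(a\,|\,n-a-1)}.
\end{equation*}
One then tries to isolate $\Super_{(1\,|\,n-2)}$ by peeling off the $a = n-1$ summand (just handled) and the intermediate summands $a = 2,\ldots,n-2$ by downward induction, using the Hook-Components and Coefficients-Length conjectures for control. On the right-hand side, a matching combinatorial skewing formula is needed: $e_1^\perp$ applied to the unrestricted Tamari sum $\sum_{\alpha \leq \beta} q^{d(\alpha,\beta)} \mathbb{L}_\beta(t;\bm{z})$ should stratify, according to the lower bound $\Gamma_a$ of the resulting intervals, into pieces matching each summand of \pref{skewing_En}. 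The principal obstacle is precisely this stratification: one must establish a refined skewing rule for vertical-strip LLT polynomials expressing $e_1^\perp\mathbb{L}_\beta$ in a way compatible with the Tamari structure, and then show by a bijective or sign-reversing argument that the intermediate summands (which have no individually known combinatorial form) cancel correctly. This is also the natural point at which the hypothesis $a \in \{0, 1, n-1\}$, as opposed to general $a$, appears to be essential.
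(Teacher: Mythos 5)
The statement you are addressing is not proved in the paper: it is stated as a conjecture, and the author only remarks that the case $a=0$ \emph{is} the trivariate shuffle conjecture of~\cite{trivariate} (still open) and that the case $a=n-1$ is ``more or less implicit'' in~\cite{bousquet}. So there is no proof in the paper to compare yours against, and what you have written is a research plan rather than a proof. Its essential gaps are these. For $a=0$ you explicitly take the trivariate shuffle conjecture as a hypothesis, so nothing is established. For $a=n-1$, the closed form $\M_{(a\,|\,b)}(q;\bm{z})=(1+q+\cdots+q^b)\,\bm{H}_{(n-2\,|\,1)}(q;\bm{z})$ and the $\pi$-projection isomorphism are both stated in the paper only for the single-set modules $\Mnk{(a\,|\,b)}{1}$; they give the specialization $\Super_\rho(q,0,0;\bm{z})$ (one row of $\bm{X}$), not the trivariate character $\Super_\rho(q,t,1;\bm{z})$, so ``iterating the $\pi$-projection'' cannot produce the left-hand side of the identity you need, and ``a finite verification one hopes to promote to an induction'' is not an argument.

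For $a=1$, the skewing identity \pref{skewing_En} you invoke is itself Conjecture~\ref{Skew_Conj} of the paper, verified only for $n\leq 6$, and the Hook-Components and Coefficients-Length statements you use ``for control'' are likewise conjectures. Moreover, even granting all of them, the step you correctly identify as the principal obstacle --- a skewing rule for $e_1^\perp\mathbb{L}_\beta$ that stratifies Tamari intervals by their lower bound $\Gamma_a$, together with a cancellation argument for the intermediate summands $a=2,\ldots,n-2$ --- is precisely the missing mathematical content, and you supply no construction for it. Naming an obstacle is not the same as overcoming it. As it stands, each of the three cases of your plan bottoms out in either an open conjecture or an unspecified combinatorial lemma, so the proposal does not constitute a proof; at best it is a plausible reduction of one open conjecture to several others.
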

 
Again, we underline that the case $a=0$ already appears in~\cite{trivariate}, and that the case $a=n-1$ is more or less implicit in~\cite{bousquet}. We expect that some variant of this formula should hold for other hooks, maybe with some tweak to the LLT-polynomial part. It would also be nice to have similar expressions involving $r$, for $\Super_\rho(q,t,r;\bm{z})$, but this is not known. 

%


\subsection*{More observed properties}
Recall that Identity~4.17 in Theorem 4.2 of~\cite[Identity~4,4]{identity} states (in our notations) that for $a+b=n$
    $$\scalar{\nabla(\widehat{s}_{n+1})}{s_{(a\,|\, b)}} = \scalar{\nabla(e_n)}{s_{(a\,|\, b-1)} }.$$
 This equality appears to lift to the following similar multivariate identity:
\begin{align}
  & \scalar{\Super_{(n+1)}}{s_{(a\,|\, b)}} =
   \scalar{\Super_{1^n}}{s_{(a\,|\, b-1)}},\qquad \hbox{for all}\qquad a+b=n.
    \end{align} 
Another interesting observed identity is:
\begin{align}
   & (e_1^\perp \A_{\rho})(\bm{q}) = \sum_{\mu\vdash n} \scalar{\Super_{\rho}(1+\bm{q};\bm{z})}{f_\mu(\bm{z})}.
\end{align}

\bigskip
\noindent
{\bf{Acknowledgments.}}\  Much of this work would not have been achieved without the possibility of perusing sufficiently large expressions resulting from difficult  explicit computations. These calculations were very elegantly realized by Pauline Hubert and Nicolas Thi\'ery using the Sage computer algebra system, together with an inspired used of the right mathematical properties of objects considered, and  special properties of higher Specht modules. Indeed, direct calculations of the relevant symmetric function expressions rapidly become unfeasible, even with powerful computers. 

\renewcommand{\appendixpagename}{\bleu{Appendix}}
\section*{\bleu{Appendix}}

{\footnotesize
\subsection*{The \texorpdfstring{$s_\lambda\otimes s_\mu$}{ss}-expansions of \texorpdfstring{$\nabla(\widehat{s}_{\rho})$}{nsmn} and \texorpdfstring{$\Super_{\rho}$}{emn} for hooks of size \texorpdfstring{$5$}{5}.}\

\parindent=0pt

\begin{itemize}\setlength\itemsep{6pt}\setlength{\itemindent}{-.5in}

\item[]$\begin{aligned}    
\nabla(\widehat{s}_{5})&=
  1 \otimes s_{41} 
   + (s_{1} + s_{2}) \otimes s_{32} + (s_{1} + s_{2} + s_{3}) \otimes s_{311} 
  + (s_{11} + s_{21}+ s_2+ s_{3} + s_{4}) \otimes s_{221}\\
         &\qquad   
  + (s_{11}  + s_{21} + s_{31}   + s_{3}  + s_{4} + s_{5}) \otimes s_{2111}
  + (s_{31}  + s_{41}   + s_{6})\otimes s_{11111}
  \end{aligned}$ 
  
\item[]$\begin{aligned}    
\nabla(\widehat{s}_{41})&=
    s_{1} \otimes s_{41} 
 + (s_{11} + s_{2} + s_{3})\otimes s_{32} 
 + (s_{11}  + s_{21} + s_{2}+ s_{3} + s_{4}) \otimes s_{311}\\
         &\qquad
 + (2s_{21}  + s_{31}+ s_{3} + s_{4} + s_{5}) \otimes s_{221} 
 + (s_{21}  + s_{22}  + 2s_{31}+ s_{41}    + s_{4} + s_{5} + s_{6}) \otimes s_{2111}\\
         &\qquad
 + (s_{32}  + s_{41}  + s_{51}  + s_{7})\otimes s_{11111}
 \end{aligned}$
 
 \item[]$\begin{aligned}  
\nabla(\widehat{s}_{311})&=
    s_{2} \otimes s_{41} + (s_{21}+ s_{3} + s_{4}) \otimes s_{32}
  + (s_{21}+ s_{31}+ s_{3}+ s_{4} + s_{5})\otimes s_{311}
  	\\ &\qquad 
  + (s_{22}+ 2s_{31} + s_{41} + s_{4} + s_{5}+ s_{6}) \otimes s_{221}
  + (s_{31}    + s_{32}   + 2s_{41}+ s_{51} + s_{5} + s_{6} + s_{7}) \otimes s_{2111}
  	\\   &\qquad 
  + (s_{42}   + s_{51}  + s_{61} + s_{8}) \otimes s_{11111}\\
 \end{aligned}$
 
  \item[]$\begin{aligned}  
\nabla(\widehat{s}_{2111})&=
   s_{3} \otimes s_{41} + (s_{21} + s_{31} + s_{4} + s_{5}) \otimes s_{32} 
+ (s_{22}+ s_{31} + s_{41} + s_{4} + s_{5} + s_{6}) \otimes s_{311}\\
         &\qquad  
+ (s_{32} + s_{31} + 2s_{41} + s_{51} + s_{5} + s_{6} + s_{7}) \otimes s_{221}
	\\  &\qquad 
+ (s_{32} + s_{42} + s_{41} + 2s_{51} + s_{61}  + s_{6} + s_{7} + s_{8}) \otimes s_{2111} 
+ (s_{33}  + s_{52} + s_{61}   + s_{71}  + s_{9}) \otimes s_{11111}
\end{aligned}$

\end{itemize}

\begin{itemize}\setlength\itemsep{6pt}\setlength{\itemindent}{-.5in}
\item[] $\Super_{5}=\nabla(\widehat{s}_5)+s_{111} \otimes s_{11111},$
\item[] $\Super_{41}=\nabla(\widehat{s}_{41})+s_{111} \otimes s_{2111} + s_{211} \otimes s_{11111},$
\item[] $\Super_{311}=\nabla(\widehat{s}_{311})+s_{111} \otimes s_{221} + s_{211} \otimes s_{2111} + s_{311} \otimes s_{11111},$
\item[] $\Super_{2111}=\nabla(\widehat{s}_{2111})+s_{211} \otimes s_{221} + s_{311} \otimes s_{2111} + s_{411} \otimes s_{11111},$
\item[]  $\begin{aligned}       
\Super_{11111}&= \nabla(\widehat{s}_{11111}) + ( s_{211} + s_{311}) \otimes s_{221}
	+ (s_{111} + s_{211} + s_{311} + s_{411}) \otimes s_{2111}
	+ (s_{1111} + s_{311} + s_{411} + s_{511}) \otimes s_{11111}.
\end{aligned}$
 \end{itemize}       
   
  (The value of $\nabla(\widehat{s}_{11111}))$ may be found in~\cite{MRC1}.)
  


\subsection*{The expansions of \texorpdfstring{$ \Super_{\rho}$}{emn} for hooks of size \texorpdfstring{$6$}.}
\ \null\medskip

\begin{itemize}\setlength\itemsep{6pt}\setlength{\itemindent}{-.5in}
\item[] $\begin{aligned}  
\Super_{6}&= 
1 \otimes s_{51}
+ ( s_{1} + s_{2} + s_{3} ) \otimes s_{42}
+ ( s_{1} + s_{2} + s_{3} + s_{4} ) \otimes s_{411}
+ ( s_{21} + s_{2} + s_{4} ) \otimes s_{33}\\
         &\quad   
+( s_{22} + s_{11} + 2s_{21} + 2s_{31} + s_{41} + s_{2} + 2s_{3} + 2s_{4} + 2s_{5} + s_{6} ) \otimes s_{321}\\
         &\quad   
+( s_{32} + s_{11} + s_{21} + 2s_{31} + s_{41} + s_{51} + s_{3} +  s_{4} + 2s_{5} + s_{6} + s_{7} ) \otimes s_{3111}\\
         &\quad   
+( s_{211} + s_{32} + s_{21} + s_{31} + s_{41} + s_{51} + s_{4} + s_{5} + s_{7} ) \otimes s_{222}\\
         &\quad   
+( s_{111} + s_{211} + s_{311} + s_{22} + s_{32} + s_{42} + s_{21} + 2s_{31} + 3s_{41} + 2s_{51}+ s_{61}   
         + s_{4} + s_{5} + 2s_{6} + s_{7} + s_{8} ) \otimes s_{2211}\\
         &\quad   
+( s_{111} + s_{211} + s_{311} + s_{411} + s_{33} + s_{32} + s_{42} + s_{52} 
	\\  &\quad\qquad\qquad\qquad
	 + s_{31} + 2s_{41} + 2s_{51} + 2s_{61} + s_{71} + s_{6} + s_{7} + s_{8} + s_{9} ) \otimes s_{21111}\\
         &\quad   
+( s_{1111} + s_{311} + s_{411} + s_{511} + s_{43} + s_{42} + s_{62} 
	+ s_{61} + s_{71} + s_{81} + s_{10.} ) \otimes s_{111111}
\end{aligned}$
\vfill 
\item[] $\begin{aligned}  
\Super_{51}&= s_{1} \otimes s_{51}
+ ( s_{11} + s_{21} + s_{2} + s_{3} + s_{4} ) \otimes s_{42}
+ ( s_{11} + s_{21} + s_{31} + s_{2} + s_{3} + s_{4} + s_{5} ) \otimes s_{411}\\
         &\quad
+ ( s_{21} + s_{31} + s_{3} + s_{5} ) \otimes s_{33}\\
         &\quad
+ ( s_{111} + s_{211} + 2s_{22} + s_{32} + 2s_{21} + 4s_{31} + 3s_{41} + s_{51} 
      + s_{3} + 2s_{4} + 2s_{5} + 2s_{6} + s_{7} ) \otimes s_{321}\\
         &\quad
+ ( s_{111} + s_{211} + s_{311} + s_{21} + s_{22} + 2s_{32} + s_{42} 
       + 2s_{31} + 3s_{41} + 2s_{51} + s_{61} \\
         &\quad\qquad\qquad\qquad
         	+ s_{4} + s_{5} + 2s_{6} + s_{7} + s_{8} ) \otimes s_{3111}\\
         &\quad
+ ( s_{211} + s_{311} + s_{22} + s_{32} + s_{42} + s_{31} + 2s_{41} + s_{51} + s_{61}
       + s_{5} + s_{6} + s_{8} ) \otimes s_{222}\\
         &\quad
+ ( s_{221} + 2s_{211} + 2s_{311} + s_{411} + s_{33} + s_{22} + 3s_{32} + 2s_{42} + s_{52}
       + s_{31} + 3s_{41}\\
         &\quad\qquad\qquad 
         + 4s_{51} + 2s_{61} + s_{71}
  + s_{5} + s_{6} + 2s_{7} + s_{8} + s_{9} ) \otimes s_{2211}\\
         &\quad
+ ( s_{1111} + s_{221} + s_{321} + s_{211} + 2s_{311} + 2s_{411} + s_{511} + s_{33} + s_{43}
         + s_{32} + 3s_{42}
       \\ &\quad\qquad\qquad  
              + 2s_{52} + s_{62} 
        + s_{41} + 2s_{51} + 3s_{61} + 2s_{71} + s_{81}
         + s_{7} + s_{8} + s_{9} + s_{(10)} ) \otimes s_{21111}\\
         &\quad
+ ( s_{2111} + s_{321} + s_{421} + s_{411} + s_{511} + s_{611} + s_{43} + s_{53} + s_{52} + s_{62}\\
         &\quad\qquad\qquad  + s_{71} + s_{72} + s_{81} + s_{91} + s_{(11)} ) \otimes s_{111111} 
\end{aligned}$
\item[] $\begin{aligned}  
\Super_{411}&= s_{2}  \otimes s_{51}
+ ( s_{21} + s_{31} + s_{3} + s_{4} + s_{5} ) \otimes s_{42}\\
         &\quad
+ ( s_{22} + s_{21} + s_{31} + s_{41} + s_{3} + s_{4} + s_{5} + s_{6} ) \otimes s_{411}\\
         &\quad
+ ( s_{211} + s_{22} + s_{31} + s_{41} + s_{4} + s_{6} ) \otimes s_{33}\\
         &\quad
+ ( 2s_{211} + s_{311} + s_{22} + 3s_{32} + s_{42} + 2s_{31} + 4s_{41} + 3s_{51} + s_{61}
         + s_{4} + 2s_{5} + 2s_{6} + 2s_{7} + s_{8} ) \otimes s_{321}\\
         &\quad
+ ( s_{221} + s_{211} + s_{311} + s_{411} + s_{33} + 2s_{32} + 2s_{42} + s_{52} 
        + s_{31} + 2s_{41}  
        \\  &\quad\qquad\qquad
        + 3s_{51} + 2s_{61} + s_{71} 
          + s_{5} + s_{6} + 2s_{7} + s_{8} + s_{9} ) \otimes s_{3111}\\
         &\quad
+ ( s_{221} + s_{311} + s_{411} + s_{33} + s_{32} + s_{42} + s_{52}
         + s_{41} + 2s_{51} + s_{61} + s_{71} + s_{6} + s_{7} + s_{9} ) \otimes s_{222}\\
         &\quad
+ ( s_{1111} + s_{221} + s_{321} + 3s_{311} + 2s_{411} + s_{511} 
       + s_{33} + s_{43}+ s_{32} + 4s_{42} + 2s_{52} + s_{62}\\
         &\quad\qquad\qquad+ s_{41} + 3s_{51} + 4s_{61} + 2s_{71} + s_{81}
         + s_{6} + s_{7} + 2s_{8} + s_{9} + s_{(10)} ) \otimes s_{2211}\\
         &\quad
+ ( s_{2111} + 2s_{321} + s_{421} + s_{311} + 2s_{411} + 2s_{511} + s_{611} 
        + s_{33} + 2s_{43} + s_{53} 
        \\ &\quad\qquad\qquad 
        + s_{42} + 3s_{52} + 2s_{62} + s_{72}
        + s_{51} + 2s_{61} + 3s_{71} + 2s_{81} + s_{91}
          + s_{8} + s_{9} + s_{(10)} + s_{(11)} ) \otimes s_{21111} \\
         &\quad
+ (s_{3111} + s_{331} + s_{421} + s_{521} + s_{511} + s_{611} + s_{711} 
        + s_{44} + s_{53} + s_{63} 
        \\ &\quad\qquad\qquad 
         + s_{62} + s_{72} + s_{82}
        + s_{81} + s_{91} + s_{(10,1)} + s_{(12)})\otimes s_{111111}
\end{aligned}$

\item[] $\begin{aligned}  
\Super_{3111}&=s_{3}  \otimes s_{51}
+ ( s_{22} + s_{31} + s_{41} + s_{4} + s_{5} + s_{6} ) \otimes s_{42}\\
         &\quad
+ ( s_{32} + s_{31} + s_{41} + s_{51} + s_{4} + s_{5} + s_{6} + s_{7} ) \otimes s_{411}
+ ( s_{32} + s_{41}  + s_{51} + s_{5} + s_{7} ) \otimes s_{33}\\
         &\quad
+ ( s_{221} + 2s_{311} + s_{411} + s_{33} + 2s_{32} + 3s_{42} + s_{52} 
         + 2s_{41} + 4s_{51}
       \\ &\quad\qquad\qquad  
          + 3s_{61} + s_{71} 
       + s_{5} + 2s_{6} + 2s_{7} + 2s_{8} + s_{9} ) \otimes s_{321}\\
         &\quad
+ ( s_{321} + s_{311} + s_{411} + s_{511} + s_{33} + s_{43} + 2s_{42} + 2s_{52} + s_{62} \\
         &\quad\qquad\qquad  + s_{41} + 2s_{51}  + 3  s_{61} + 2s_{71} + s_{81} 
         + s_{6} + s_{7} + 2s_{8} + s_{9} + s_{(10)} ) \otimes s_{3111}\\
         &\quad
+ ( s_{321} + s_{311} + s_{411} + s_{511} + s_{43} + 2s_{42} + s_{52} + s_{62} 
       \\ &\quad\qquad\qquad  
         + s_{51} + 2s_{61} + s_{71} + s_{81} + s_{7} + s_{8} + s_{(10)} ) \otimes s_{222}\\
         &\quad
+ ( s_{2111} + 2s_{321} + s_{421} + 3s_{411} + 2s_{511} + s_{611} + s_{33} + 2s_{43} + s_{53} 
        + s_{42} + 4s_{52} + 2s_{62} + s_{72}
         \\ &\quad\qquad\qquad   
         + s_{51} + 3s_{61} + 4s_{71} + 2s_{81} + s_{91}
         + s_{7} + s_{8} + 2s_{9} + s_{(10)} + s_{11.} ) \otimes s_{2211}\\
         &\quad
+ ( s_{3111} + s_{331} + 2s_{421} + s_{521} + s_{411} + 2s_{511} + 2s_{611} + s_{711}
         + s_{44} + s_{43} + 2s_{53} + s_{63}
         \\ &\quad\qquad\qquad 
         + s_{52} + 3s_{62} + 2s_{72} + s_{82}
         + s_{61} + 2s_{71} + 3s_{81} + 2s_{91} + s_{(10,1)}
         + s_{9} + s_{(10)} + s_{(11)} + s_{(12)} ) \otimes s_{21111} \\
         &\quad
 +(s_{4111} + s_{431} + s_{521} + s_{621} + s_{611} + s_{711} + s_{811} 
	+ s_{54} + s_{63} + s_{73} 
	+ s_{72} + s_{82}  
	\\ &\quad\qquad\qquad
	+ s_{91} + s_{92} + s_{(10,1)} + s_{(11,1)} + s_{(13)})\otimes s_{111111}
\end{aligned}$

\vfill 

\item[] $\begin{aligned}  
\Super_{21111}&= s_{4}  \otimes s_{51}
+ ( + s_{32} s_{31} + s_{41} + s_{51} + s_{5} + s_{6} + s_{7} ) \otimes s_{42}\\
         &\quad
+ ( s_{32} + s_{42} + s_{41} + s_{51} + s_{61} + s_{5} + s_{6} + s_{7} + s_{8} ) \otimes s_{411}\\
         &\quad
+ ( s_{311} + s_{22} + s_{42} + s_{41} + s_{51} + s_{61} + s_{6} + s_{8} ) \otimes s_{33}\\
         &\quad
+ ( s_{221} + s_{321} + s_{311} + 2s_{411} + s_{511} + s_{33} + s_{43}
         + s_{32} + 3s_{42} + 3s_{52} + s_{62}
         \\ &\quad\qquad\qquad 
         + s_{41} + 3s_{51} + 4s_{61} + 3s_{71}+ s_{81}
         + s_{6} + 2s_{7} + 2s_{8} + 2s_{9} + s_{(10)} ) \otimes s_{321}\\
         &\quad
+ ( s_{321} + s_{421} + s_{411} + s_{511} + s_{611} + s_{33} + s_{43} + s_{53}
	+ s_{42} + 3s_{52} + 2s_{62} + s_{72}
         \\ &\quad\qquad\qquad 
         + s_{51} + 2s_{61} + 3s_{71} + 2s_{81} + s_{91}
         + s_{7} + s_{8} + 2s_{9} + s_{(10)} + s_{(11)} ) \otimes s_{3111}\\
         &\quad
+ ( s_{2111} + s_{321} + s_{421} + s_{33} + s_{411} + s_{511} + s_{611}
	+ s_{43} + s_{53} + 2s_{52} + s_{62} + s_{72}
	\\ &\quad\qquad\qquad 
	+ s_{51} + s_{61} + 2s_{71} + s_{81} + s_{91} + s_{8} + s_{9} + s_{(11)} ) \otimes s_{222}\\
         &\quad
+ ( s_{3111} + s_{331} + s_{321} + 2s_{421} + s_{521} + s_{411} + 3s_{511} + 2s_{611} + s_{711} \\
         &\quad\qquad\qquad + s_{44} + 2s_{43} + 2s_{53} + s_{63}
         + s_{42} + 2s_{52} + 4s_{62} + 2s_{72} + s_{82}\\
         &\quad\qquad\qquad + 2s_{61} + 3s_{71} +  4s_{81} + 2s_{91} + s_{(10,1)}
         + s_{8} + s_{9} + 2s_{(10)} + s_{(11)} + s_{(12)} ) \otimes s_{2211}\\
         &\quad
+ ( s_{4111} + s_{331} + s_{431} + s_{421} + 2s_{521} + s_{621} 
      + s_{511} + 2s_{611} + 2s_{711} + s_{811}\\
         &\quad\qquad\qquad  + s_{54} + s_{43} + 2s_{53} + 2s_{63} + s_{73}
         + 2s_{62} + 3s_{72} + 2s_{82} + s_{92}\\
         &\quad\qquad\qquad  + s_{71} + 2s_{81} + 3s_{91} + 2s_{(10,1)} + s_{(11,1)}
         + s_{(10)} + s_{(11)} + s_{(12)} + s_{(13)}) \otimes s_{21111}\\
         &\quad
+ (s_{5111} + s_{431} + s_{531} + s_{621} + s_{721} + s_{711} + s_{811} + s_{911} 
	+ s_{64} + s_{63} + s_{73} + s_{83}  
	\\ &\quad\qquad\qquad  
	+ s_{82} + s_{92}+ s_{(10,2)} 
	+ s_{(10,1)} + s_{(11,1)} + s_{(12,1)} + s_{(14)})\otimes s_{111111}
 \end{aligned}$

\end{itemize}

  (The value of $\Super_{111111}=\E_6$ may be found in~\cite{MRC1}.)


\subsection*{The \texorpdfstring{$e$}{e}-expansions of the \texorpdfstring{$\F_{\rho}$}{fmn}'s for hooks of size \texorpdfstring{$\leq 4$}{4}.}
\ \null\medskip

\begin{itemize}\setlength\itemsep{4pt}\setlength{\itemindent}{-.5in}
\item[]$ \F_{1}=1\otimes  e_{1};$
		\smallskip
\item[]$\F_{2}= 1\otimes e_{2},$
\item[]$\F_{11}= 1\otimes e_{11} + s_{1} \otimes e_{2};$
		\smallskip
\item[]$\F_{3}=1\otimes e_{21} + s_{1} \otimes e_{3},$
\item[]$\F_{21}=1\otimes e_{21} + s_{1} \otimes e_{21} + s_{2} \otimes e_{3},$
\item[]$\F_{111}=1\otimes e_{111} + (2s_{1} + s_{2}) \otimes e_{21} + (s_{11}+s_{3}) \otimes e_{3};$
		\smallskip
\item[]$\F_{4}= 1 \otimes e_{211} + s_{1} \otimes e_{22} + (s_{1}+ s_{2})\otimes e_{31} + (s_{11} + s_{3}) \otimes e_{4},$
\item[]$\F_{31}= (1  + s_{1} )\otimes e_{211} + s_{2} \otimes e_{22} + (s_{11} + s_{1} + s_{2} + s_{3}) \otimes e_{31} + (s_{21} + s_{4})\otimes e_{4},$
\item[]$\begin{aligned}
         &\F_{211}= (1   + s_{1}+ s_{2}) \otimes e_{211} + (s_{11} + s_{1} + s_{3})\otimes e_{22} 
	+ (s_{21} + s_{2}  + s_{3} + s_{4}) \otimes e_{31}+( s_{31}  + s_{5} )\otimes e_{4},
	\end{aligned}$
\item[]$\begin{aligned}
	&\F_{1111}= 1  \otimes e_{1111} + (3s_{1} + 2s_{2} + s_{3}) \otimes e_{211} + (s_{11} + s_{21} + s_{2}  + s_{4}) \otimes e_{22} \\
	&\quad \qquad 
	+ (2s_{11} + s_{21} + s_{31} + 2s_{3} + s_{4} + s_{5}) \otimes e_{31}+ (s_{111}  + s_{31} + s_{41}  + s_{6}) \otimes e_{4}.
	\end{aligned}$
\end{itemize}
}

\renewcommand{\refname}{\bleu{References}}

\end{document}